\newtheorem{assumption}{Assumption}
\newtheorem{theorem}{Theorem}
\newtheorem{lemma}{Lemma}
\newtheorem{remark}{Remark}                             
\title{\LARGE \bf
	Distributed Online Optimization with Coupled Inequality Constraints over Unbalanced Directed Networks
}
\author{Dandan Wang, Daokuan Zhu, Kin Cheong Sou and Jie Lu
	\thanks{D. Wang is with the School of Information Science and Technology, ShanghaiTech University, Shanghai 201210, China, with the University of Chinese Academy of Sciences, Beijing 100049, China, and with Shanghai Institute of Microsystem and Information Technology, Chinese Academy of Sciences, Shanghai 200050, China. Email:
		{\tt\small wangdd2@shanghaitech.edu.cn}. D. Zhu and J. Lu are with the School of Information Science and
		Technology, ShanghaiTech University, Shanghai 201210, China.
		Email:{\tt\small\{zhudk, lujie\}@shanghaitech.edu.cn}. K.C. Sou is with the Department
		of Electrical Engineering, National Sun Yat-sen University, Taiwan. Email: {\tt\small sou12@mail.nsysu.edu.tw}.}
}
\begin{document}

	\maketitle
	\thispagestyle{empty}
	\pagestyle{empty}

	\begin{abstract}
		
		This paper studies a distributed online convex optimization problem, where agents in an unbalanced network cooperatively minimize the sum of their time-varying local cost functions subject to a coupled inequality constraint. To solve this problem, we propose a distributed dual subgradient tracking algorithm, called DUST, which attempts to optimize a dual objective by means of tracking the primal constraint violations and integrating dual subgradient and push-sum techniques. Different from most existing works, we allow the underlying network to be unbalanced with a column stochastic mixing matrix. We show that DUST achieves sublinear dynamic regret and constraint violations, provided that the accumulated variation of the optimal sequence grows sublinearly. If the standard Slater’s condition is additionally imposed, DUST acquires a smaller constraint violation bound than the alternative existing methods applicable to unbalanced networks. Simulations on a plug-in electric vehicle charging problem demonstrate the superior convergence of DUST.		
	\end{abstract}

	\section{Introduction}
	
	Distributed online convex optimization (DOCO) has received considerable interest in recent years, motivated by its broad applications in dynamic networks with uncertainty, such as resource allocation for wireless network \cite{c1}, target tracking \cite{c2}, multi-robot surveillance \cite{c3}, and medical diagnosis\cite{c4}.
	In these scenarios, each agent in a network holds a time-varying local cost function and only has access to its real-time local cost function after making a decision based on historical information. Compared with centralized online optimization, DOCO enjoys prominent advantages in privacy protection, alleviation of computation and communication burden, and robustness to channel failures \cite{c5}.  
	
There has been a great number of distributed algorithms for solving DOCO problems \cite{c2,c3,c4,c6,c7,c8,c9,c10,c11,c12, c13, c14, c15}. Nevertheless, most of them are limited to unconstrained problems or simple set constraints, and do not allow for coupled inequality constraints that arise in many engineering applications. Coupled inequality constraints involve information from all agents, which poses a significant challenge to handle them in a distributed manner. To date, only a few distributed algorithms have been developed to address DOCO problems with coupled inequality constraints, including various variants of the saddle-point algorithm  \cite{c13,c10,c11,c12}, a primal-dual dynamic mirror descent algorithm \cite{c14} that has been extended to bandit settings in \cite{c15}, and a bandit distributed mirror descent push-sum algorithm \cite{c9}.	
	However, among these works, \cite{c15,c12,c13,c14} can only be applied to balanced networks with doubly stochastic mixing matrices. Although \cite{c9,c10,c11} consider unbalanced networks, their regret and constraint violation bounds are much worse than those in \cite{c15,c12,c13,c14}.
	
	In this paper, we focus on the DOCO problem with a coupled inequality constraint over an unbalanced network with a column stochastic mixing matrix, and propose a distributed dual subgradient tracking (DUST) algorithm to solve it. 
	To develop DUST, we first attempt to employ the subgradient method to address the dual problem of the constrained DOCO at each time instance. Then, to enable distributed implementation, we introduce auxiliary variables to track the primal constraint violations, which can be viewed as estimated dual subgradients. Finally, we harness the push-sum technique to eliminate the imbalance of networks, leading to the DUST algorithm. Our main contributions are elaborated as follows:	
	
	\begin{enumerate}
		\item DUST is able to address DOCO with \textit{coupled inequality} constraints over \textit{unbalanced} networks with column stochastic mixing matrices, while the alternative methods in \cite{c15,c12,c13,c14} require balanced interaction graphs. 
		\item We adopt \textit{dynamic regret} as the performance  measure of DUST, which is a more stringent metric than the static regret used in \cite{c13,c11,c12,c9}.
		\item
		We show that DUST achieves $\mathcal{O}(\sqrt{T}+V_T)$ dynamic regret and $\mathcal{O}(T^{\frac{3}{4}})$ constraint violation bounds, where $T$ is a finite time horizon and $V_T$ is the accumulated variation of the optimal sequence. Provided that $V_T$ grows sublinearly, DUST is able to achieve sublinear dynamic regret and constraint violations. Moreover, the constraint violation bound is improved to $\mathcal{O}(\sqrt{T})$ if we additionally assume the Slater's condition. To the best of our knowledge, there are no
		existing distributed algorithms achieving comparable dynamic regret and constraint violation bounds for DOCO problems with coupled constraints over unbalanced networks.
	\end{enumerate}
	
	The remainder of the paper is organized as follows. Section \ref{sec:problemformulation} formulates a DOCO with a coupled inequality constraint over unbalanced graphs with column stochastic mixing matrices. Section \ref{sec: DUSTalgorithm} develops the proposed DUST algorithm, and Section \ref{sec: regretanalysis} provides bounds of dynamic regret and constraint violations. Section \ref{sec: numericalexperiments} presents the numerical experiments, and Section \ref{sec:conclusion} concludes the paper.
	
	Throughout the paper, we adopt $\mathbb{R}^n$, $\mathbb{R}_+^n$ as the set of $n$-dimensional vectors, nonnegative vectors, respectively. For any set $X \subseteq {\mathbb{R}^n}$, $\operatorname{relint}(X)$ is its relative interior. $ A\otimes B$ represents the Kronecker product of any two matrices $A$ and $B$ with arbitrary size. Let $[a]_+$ represents the component-wise projection of a vector $a \in \mathbb{R}^n$ onto $\mathbb{R}_+^n$. Denote $I_d$ and $\mathbf{1}_p$ ($\mathbf{0}_p$) as the $d$-dimensional identity matrix and  the all-one (all-zero) column vectors with $p$ dimensions. Let $\|\cdot\|$ be the Euclidean norm.  $\langle x, y\rangle$ represents the standard inner product of two vectors $x$ and $y$. The notation $w_{ij,t}$ denotes the $i,j$-th component of matrix $W_t$ at time $t$. Let $\lceil \cdot \rceil$ and $\lfloor \cdot \rfloor$ be the ceiling and floor functions, respectively. For a convex function $f:\mathbb{R}^n\rightarrow\mathbb{R}$, 
	we denote $\partial f(x)$ as a subgradient of $f$ at $x$, i.e., $f(y) \ge f(x)+\langle\partial f(x), y-x\rangle$, $\forall y \in \mathbb{R}^n$.

	\section{Problem Formulation}\label{sec:problemformulation}
	Consider the network at each time $t\in \{1, \ldots, T\}$ modeled as a directed graph $\mathcal{G}_t=(\mathcal{V}, \mathcal{E}_t)$, where $\mathcal{V}=\{1,..., N\}$ is the set of nodes and $\mathcal{E}_t\subseteq\{\{i,j\}:i,j\in\mathcal{V},\;i\neq j\}$ is the set of edges. An edge $(j,i) \in  \mathcal{E}_t$ means  that node $i$ can receive a message from node $j$. Let $\mathcal{N}_{i,t}^{\text{in}}=\{j | (j,i) \in  \mathcal{E}_t\}\cup \{i\}$ and $\mathcal{N}_{i,t}^{\text{out}}=\{j|(i,j) \in  \mathcal{E}_t\}\cup\{i\}$ be the sets of in-neighbors and out-neighbors of node $i$, respectively. The mixing matrix $W_t$  associated with $\mathcal{G}_t$ is defined as $w_{ij,t} >0$ if $(j,i) \in  \mathcal{E}_t$ or $i=j$, and $w_{ij,t}=0$, otherwise. We assume each node $j \in \mathcal{V}$ only knows the weights related to its out-neighbors, i.e., $w_{ij,t}$, $\forall i \in \mathcal{N}_{j,t}^{\text{out}}$. We impose the following assumption on the interaction graph.
	\begin{assumption}\label{ass: networkassumption}
		 $\{\mathcal{G}_t\}_{t=1}^{T}$ satisfies:
		 	\begin{enumerate}
			\item There exists a constant $a \in (0,1)$ such that for each $t \ge 1$, $w_{ij,t}>a$ if $w_{ij,t} >0$. \label{graphnondegeneracy}
			\item For each $t \ge 1$, the mixing matrix $W_t$ is column stochastic, i.e., $\sum_{i=1}^{N} w_{ij,t}=1$ for all $j \in \mathcal{V}$.\label{columnstochastic}
			\item There exists an integer $B >0$ such that for any $k \ge 0$, the graph $(\mathcal{V}, \bigcup_{t=kB+1}^{(k+1)B}\mathcal{E}_t)$ is connected. \label{Bconnect}
		\end{enumerate}
	\end{assumption}
	
	An example of the mixing matrix that satisfies Assumption~\ref{ass: networkassumption} is $w_{ij,t}=1/d_{j,t}$, if $ i \in \mathcal{N}_{j,t}^{\text{out}}$; otherwise, $w_{ij,t}=0$, where 
	$d_{j,t}=|\mathcal{N}_{j,t}^{\text{out}}|$ is the out-degree of node $j$ at each time $t$. In this case, each node only needs to know its out-degree at each time $t$. Assumption~\ref{ass: networkassumption} ensures that there exists a path from one node to every other nodes within the interval of length $B$. Assumption~\ref{ass: networkassumption} is less restrictive than those in \cite{c12,c13,c14, c15}, which require $W_t$ to be doubly stochastic.

	We consider the distributed online problem with a globally coupled inequality constraint over the directed graph $\mathcal{G}_t$, where each node $i \in \mathcal{V}$ privately holds a time-varying local cost function $f_{i,t}:\mathbb{R}^{d_i}\rightarrow \mathbb{R}$, a constraint function $g_{i}:\mathbb{R}^{d_i}\rightarrow \mathbb{R}^p$, and a constraint set $X_i\subseteq\mathbb{R}^{d_i}$. Let $\mathbf{x}=[(x_1)^T, \ldots, (x_N)^T]^T\in \mathbb{R}^{\sum_{i=1}^{N}d_i}$ and $X=X_1\times \cdots \times X_N$ be the Cartesian product of all the $X_i$'s. At each time $t$, all nodes cooperate to minimize the sum of local cost functions while satisfying a coupled inequality constraint and set constraints, which can be written as   
	\begin{equation} \label{eq:primalprob}
		\begin{array}{cl}
			\underset{\substack{x_i, \forall i \in \mathcal{V} }}{\operatorname{minimize}} ~\!\!\!& f_t(\mathbf{x}):=\sum_{i=1}^{N} f_{i,t}(x_{i})\\  
			{\operatorname{ subject~to }}\!\!\! & \sum_{i=1}^{N}g_{i}(x_{i}) \leq \mathbf{0}_p,\\\vspace{2mm} 
			{} & {x_i \in X_i,\;\forall i \in \mathcal{V},}
		\end{array}
	\end{equation} 
	where the feasible set $\mathcal{X}:=\{\mathbf{x}\in X | \sum_{i=1}^{N}g_{i}(x_{i}) \leq \mathbf{0}_p \}$ is assumed to be nonempty.
	Note that the local cost function $f_{i,t}$ is unrevealed to node $i$ until it makes its decision $x_{i,t}$ at time $t$. Since node $i$ cannot access $f_{i,t}$ in advance, it is unlikely to obtain the exact optimal solution of problem \eqref{eq:primalprob}. Thus, it is desirable to develop a distributed online algorithm that generates local decisions $x_{i,t}$, $i \in \mathcal{V}$ to track the optimal solution. We make the following assumption on problem~\eqref{eq:primalprob}.
	
	\begin{assumption}\label{asm:primalcouplingprob}
		For any $t \ge 1$, problem~\eqref{eq:primalprob} satisfies 
		\begin{enumerate}
			\item For each $i\in\mathcal{V}$, $X_i$ is a compact convex set with diameter $R:= \sup_{x_i,\tilde{x}_i \in X_i} \|x_i-\tilde{x}_i\|$. 
			\item For each $i\in\mathcal{V}$, $ f_{i,t}$ and $g_i$ are convex on $X_{i}$.\label{couplinggisumficonvex}
		\end{enumerate}
	\end{assumption}

	It is directly obtained from the compactness of $X_i$'s and the convexity of $f_i$, $g_i$ in Assumption~\ref{asm:primalcouplingprob} that there exist constants $F > 0$,  $G>0$ such that 
	\begin{align}
		\|g_i(x_i)\|&\le F,\;\forall x_i \in X_i, \forall i \in \mathcal{V}, \label{eq:bounofgivalue}\\
	\|\partial  f_{i,t}(x_{i})\|&\le G,~\|\partial g_i(x_i)\|\le G, \;\forall x_i \in X_i, \forall i \in \mathcal{V}. \label{eq:boundsibradientoffigi}
	\end{align}

	We adopt \textit{dynamic regret} to measure the algorithm performance over a finite time horizon $T$ \cite{c2}, which is defined
	\begin{align}\label{definitionofdynamicregret}
		\operatorname{Reg}(T):=\sum_{t=1}^T \sum_{i=1}^N f_{i, t}\left(x_{i, t}\right)-\sum_{t=1}^T \sum_{i=1}^N f_{i, t}\left(x_{i, t}^*\right),
	\end{align}
	where $x_{i,t}^*$ is the $i$-th component of the optimal solution $\mathbf{x}_{t}^*=[(x_{1,t}^*)^T, \ldots, (x_{N,t}^*)^T]^T:=\underset{\mathbf{x} \in \mathcal{X}}{\arg \min } \sum_{i=1}^N f_{i, t}\left(x_i\right)$ to problem \eqref{eq:primalprob}. In contrast to the conventional metric \textit{static regret} that is defined as the difference between the accumulated cost over time and the cost incurred by the best fixed decision when all functions are known in hindsight (i.e., $\sum_{t=1}^T \sum_{i=1}^N f_{i, t}\left(x_{i}^*\right)$, where $\mathbf{x}^*=[(x_{1}^*)^T, \ldots, (x_{N}^*)^T]^T:=\underset{\mathbf{x} \in \mathcal{X}}{\arg \min } \sum_{t=1}^T\sum_{i=1}^N f_{i, t}\left(x_i\right)$), the dynamic regret \eqref{definitionofdynamicregret} allows the best decisions to vary with time and is a more stringent and suitable benchmark to capture the algorithm performance on a time-varying optimization problem \cite{c2,c3,c14}.
	
	In addition, we define the cumulative constraint violation to measure whether the coupled inequality is satisfied in a longterm run as follows:
	\begin{align}
		\operatorname{Reg}^c(T):=\left\|\left[\sum_{t=1}^T \sum_{i=1}^N g_i\left(x_{i, t}\right)\right]_{+}\right\|. \label{eq:definitionofConstVio}
	\end{align}
	
	Our goal is to design a distributed algorithm for solving the online problem \eqref{eq:primalprob} over $\mathcal{G}_t$ with superior dynamic regret and cumulative constraint violation bounds.
	
	\section{Algorithm Development}\label{sec: DUSTalgorithm}
	In this section, we propose a distributed dual subgradient tracking method to solve the distributed online problem with a coupled inequality constraint described in Section~\ref{sec:problemformulation}. 
	
	First of all, let $L_{t}: \mathbb{R}^{\sum_{i=1}^{N}d_i} \times \mathbb{R}_{+}^{p} \rightarrow \mathbb{R}$ be the Lagrangian function associated with problem \eqref{eq:primalprob} at time $t$:
	\begin{align}
		L_{t}(\mathbf{x},\mu)=f_t(\mathbf{x})+\mu^T\sum_{i=1}^{N}g_{i}(x_{i}), \displaybreak[0] \label{eq:lagrangianfunt}
	\end{align}
	where $\mu \ge \mathbf{0}_p$ is the Lagrange multiplier. We denote the dual function at time $t$ as $ D_{t}(\mu):=\min_{\mathbf{x}} \{ L_{t}(\mathbf{x},\mu)\}$. The dual problem of problem \eqref{eq:primalprob} at time $t$ is  $\max_{\mu \ge \mathbf{0}_p} D_{t}(\mu)$. If we directly apply the dual subgradient method \cite{c25} to the online problem \eqref{eq:primalprob}, we obtain the following updates: For arbitrarily given $\mu_1 \ge \mathbf{0}_p $ and each $ t \ge 1$ \begin{align}
		\mathbf{x}_{t+1}&=\operatorname{\arg min}\{L_{t+1}(\mathbf{x},\mu_t)\}, \displaybreak[0]\label{eq:xt1dualascentupdate}\\
		\mu_{t+1}&=[\mu_t+\sum_{i=1}^{N}g_{i}(x_{i,t+1})]_+, \displaybreak[0]\label{eq:mut1dualascentupdate}
	\end{align}   
	where $\mathbf{x}_{t+1}=[(x_{1,t+1})^T, \ldots, (x_{N,t+1})^T]^T \in \mathbb{R}^{\sum_{i=1}^{N}d_i}$, $\mu_{t+1} \in \mathbb{R}_p$ can be viewed as an estimate of the optimal solution to problem (1) at time $t+1$ and an estimate of the optimal dual solution to the dual problem of problem \eqref{eq:primalprob} at time $t+1$, respectively. The updates \eqref{eq:xt1dualascentupdate}--\eqref{eq:mut1dualascentupdate} actually optimize the dual problem of problem \eqref{eq:primalprob} at time $t+1$, i.e, $\max_{\mu \ge \mathbf{0}_p} D_{t+1}(\mu)$, by applying the subgradient method to compute the estimate of optimal dual solution, i.e, $\mu_{t+1}$, based on the historical information $\mu_t$, where the subgradient of the dual function $D_{t+1}(\mu)$ at $\mu_t$ is $\sum_{i=1}^{N}g_{i}(x_{i,t+1})$ according to the update \eqref{eq:xt1dualascentupdate} and the Danskin’s theorem \cite{c25}.

	However, \eqref{eq:xt1dualascentupdate} and \eqref{eq:mut1dualascentupdate} cause two issues. First, we have no prior knowledge of $f_{t+1}$ when making decision $\mathbf{x}_{t+1}$. Second, the above updates require the global quantities $\mu_t$ and $\sum_{i=1}^{N}g_{i}(x_{i,t+1})$ at each time $t$, which cannot be executed in a distributed scenario. To overcome the two issues, we let $g(x)=[(g_1(x_1))^T, \ldots, (g_N(x_N))^T]^T \in \mathbb{R}^{Np}$, and construct the following algorithm: Given $\mathbf{x}_1 \in X$, $\mathbf{y}_1=g(\mathbf{x}_1)$, $\bm{\mu}_1=\mathbf{0}_{Np}$, for any $t \ge 1$,
	\begin{align}
	\mathbf{x}_{t+1}&=\operatorname{\arg min}\limits_{x \in X } \left\{ \alpha_t \partial f_{t}(\mathbf{x}_{t})^T\!(\mathbf{x}\!-\!\mathbf{x}_{t})\right.\displaybreak[0]\notag\\ 
	&~~~\left.+\langle (W_t\otimes I_p)\bm{\mu}_{t}, ~g(\mathbf{x})\rangle\!+\eta_t\|\mathbf{x}\!-\!\mathbf{x}_{t}\|^2\right\}, \displaybreak[0] \label{eq:updateofxt11} \\
	\mathbf{y}_{t+1}&=(W_t\otimes I_p)\mathbf{y}_{t}+g(\mathbf{x}_{t+1})-g(\mathbf{x}_{t}), \displaybreak[0] \label{eq:updateofyk1}\\
	\bm{\mu}_{t+1}&=[(W_t\otimes I_p)\bm{\mu}_{t}+\mathbf{y}_{t+1}]_+, \label{eq:updateofuk1}
\end{align}
where $\mathbf{y}_t=[(y_{1,t})^T, \ldots, (y_{N,t})^T]^T\in \mathbb{R}^{Np}$, $\bm{\mu}_t=[(\mu_{1,t})^T, \ldots, (\mu_{N,t})^T]^T\in \mathbb{R}^{Np}$, and $W_t$ is the mixing matrix at time $t$ described in Section~\ref{sec:problemformulation}. Here, the parameters $\alpha_t$ is used to balance the objective optimization and the penalty of constraint violations and $\eta_t$ is the stepsize. 

The above updates \eqref{eq:updateofxt11}--\eqref{eq:updateofuk1} are capable of handling the issues caused by \eqref{eq:xt1dualascentupdate}--\eqref{eq:mut1dualascentupdate}, and potentially solve the online problem \eqref{eq:primalprob}. Specifically, we estimate the unknown $f_{t+1}$ with the first-order approximation of $f_{t}$ at $\mathbf{x}_{t}$, i.e., $f_{t}(\mathbf{x}_{t})+\partial f_{t}(\mathbf{x}_{t})^T\!(\mathbf{x}\!-\!\mathbf{x}_{t})$. The proximal term $\eta_t\|\mathbf{x}-\mathbf{x}_{t}\|^2$ in \eqref{eq:updateofxt11} guarantees the unique existence of $x_{t+1}$. To understand the distributed implementation of  \eqref{eq:updateofxt11}--\eqref{eq:updateofuk1}, let $x_{i,t}$, $y_{i,t}$, and $\mu_{i,t} $ be the $i$-th blocks of $\mathbf{x}_t$, $\mathbf{y}_t$, and $\bm{\mu}_t$, respectively. We let each node $i \in \mathcal{V}$ maintain $x_{i,t}$, $y_{i,t}$, and $\mu_{i,t} $ at time $t$. The term $\langle (W_t\otimes I_p)\bm{\mu}_{t}, ~g(\mathbf{x})\rangle$ in \eqref{eq:updateofxt11} not only enables distributed computation of $\mathbf{x}_{t+1}$, where each $x_{i,t+1}$ updates only involving its local information and the information received from its in-neighbors but also approaches to $\mu_t^T\sum_{i=1}^{N}g_{i}(x_{i})$ used in \eqref{eq:xt1dualascentupdate} if $W_t$ satisfies row stochasticity and each $\mu_{i,t}$ reaches the same value $\mu_t$. Owning to the column stochasticity of $W_t$ and the initial condition, from \eqref{eq:updateofyk1}, it is easy to obtain the update of $y_{i,t+1}$ and $\sum_{i=1}^{N}y_{i,t}=\sum_{i=1}^{N}g(x_{i,t})$, $ \forall t \ge 1$, which implies the local variable $y_{i,t}$ can track the primal constraint violations $\sum_{i=1}^N g_i(x_{i,t})$ at each time $t$. Thus, at time $t+1$, $y_{i,t+1}$ tracks the primal constraint violations $\sum_{i=1}^N g_i(x_{i,t+1})$, which can be regarded as the estimated subgradient of the dual function $D_{t+1}(\mu)$ at $\mu_t$ in \eqref{eq:mut1dualascentupdate}.  
The variable $\mu_{i,t+1}$ is the estimate for node $i$ of the optimal dual solution to the dual problem of problem \eqref{eq:primalprob} at time $t+1$, which is similar to $\mu_{t+1}$ in \eqref{eq:mut1dualascentupdate} also estimating the optimal dual solution at time $t+1$. Thus, each node $i \in \mathcal{V}$ computes $\mu_{i,t+1}$ with the weighted $\mu_{j,t}$ received from its all in-neighbor $j$ that facilitates consensual $\mu_{i,t}$, $\forall i \in \mathcal{V}$, and with the estimated dual subgradient of the dual function $D_{t+1}(\mu)$ at $\mu_t$ like \eqref{eq:mut1dualascentupdate}, i.e., $\sum_{i=1}^N g_i(x_{i,t+1})$, which can be tracked by the local variable $y_{i,t+1}$. Consequently, \eqref{eq:updateofxt11}--\eqref{eq:updateofuk1} leads to a distributed dual subgradient tracking algorithm (DUST). However, \eqref{eq:updateofxt11}--\eqref{eq:updateofuk1} do not work over unbalanced
networks since the column stochasticity of $W_t$ causes $\mu_{i,t}$, $\forall i \in \mathcal{V}$ cannot reach an identical value as they should. 
 
 To cope with unbalanced graphs, we integrate the push-sum technique into \eqref{eq:updateofxt11}--\eqref{eq:updateofuk1} to eliminate the imbalance of interaction networks by dynamically
 constructing row-stochastic matrices.
  We still refer to the resulting algorithm as DUST whose distributed implementation is described below. 
	
Let each node $i \in \mathcal{V}$ maintain variables $c_{i, t}\in \mathbb{R}$ besides $x_{i,t}$, $y_{i,t}$, and $\mu_{i,t} $. The DUST algorithm is described as follows: Given $x_{i,1} \in X_i$, $y_{i,1}=g_i(x_{i,1})$,
$c_{i,1}=1$, $\mu_{i, 1}=\mathbf{0}_p$, $\forall i \in \mathcal{V}$, for any $t \ge 1$, each node $i \in \mathcal {V}$ updates
	\begin{align}
		c_{i, t+1}&=\sum_{j \in \mathcal{N}_{i,t}^{\text{in}}} w_{i j, t} c_{j, t}, ~\label{eq:updateofcouplingcit}\displaybreak[0]\\
		\lambda_{i,t+1}&=\frac{\sum_{j \in \mathcal{N}_{i,t}^{\text{in}}} w_{i j, t} \mu_{j, t}}{c_{i,t+1}},  \label{eq:updateofcouplinglambdait1}\displaybreak[0]\\
		x_{i,t+1}&=\operatorname{\arg min}\limits_{x_{i} \in X_i } \left\{ \alpha_t\partial f_{i,t}(x_{i,t})^T\!(x_i\!-\!x_{i,t})\right.\notag\displaybreak[0]\\ 
		&~~\left.+\langle \lambda_{i,t+1}, ~g_i(x_i)\rangle\!+\eta_t\|x_i\!-\!x_{i,t}\|^2\right\}, \label{eq:updateofcouplingxt1} \displaybreak[0]\\
		y_{i,t+1}&=\sum_{j \in \mathcal{N}_{i,t}^{\text{in}}} w_{i j, t} y_{j, t}+g_i(x_{i,t+1})-g_i(x_{i,t}),\label{eq:updateofcouplingyk}\displaybreak[0]\\
		\mu_{i,t+1}&=\big[ \sum_{j \in \mathcal{N}_{i,t}^{\text{in}}} w_{i j, t} \mu_{j, t}+y_{i,t+1}\big]_+, \label{eq:updateofcouplinguk}\displaybreak[0]
	\end{align}
where the initialization $\mu_{i, 1}=\mathbf{0}_p$ is simply set satisfying $\mu_{i, 1} \ge\mathbf{0}_p$, and $y_{i,1}=g_i(x_{i,1})$, $\forall i \in \mathcal {V}$ ensures that $y_{i,t+1}$ tracks the estimated dual subgradient at time $t+1$ in \eqref{eq:updateofcouplingyk}. The updates \eqref{eq:updateofcouplingcit}, \eqref{eq:updateofcouplinglambdait1}, \eqref{eq:updateofcouplingyk}, and \eqref{eq:updateofcouplinguk} require node $i$ to collect $ w_{i j, t} c_{j, t}$, $ w_{i j, t} \mu_{j, t}$, and $ w_{i j, t}y_{j, t}$ from its every in-neighbor $j \in \mathcal{N}_{i,t}^{\text{in}}$ and \eqref{eq:updateofcouplingxt1}--\eqref{eq:updateofcouplinguk} are obtained by rewriting \eqref{eq:updateofxt11}--\eqref{eq:updateofuk1}. Obviously, the above updates only needs communication between neighboring nodes. Algorithm~\ref{alg:TVAlgorithm} details all these actions taken by the nodes. Before executing Algorithm~\ref{alg:TVAlgorithm}, all nodes need to determine the values of parameter $\alpha_t$ and the stepsize $\eta_t$. They can be set as $\alpha_t=\sqrt{t}$ and $\eta_t=t$ according to the theoretical results in  Section~\ref{sec: regretanalysis}. Different from \cite{c13, c14} whose parameters related to the time horizon $T$, we allow $\alpha_t$ and $\eta_t$ to be time-varying without knowing $T$ in advance, which provides flexibility for deciding when to stop the proposed online algorithm.
\begin{algorithm} [t]
	\caption{DUST}
	\label{alg:TVAlgorithm}
	\begin{algorithmic}[1]
		\STATE {\textbf{Initialization}}: 
		\STATE Each node $i \in \mathcal{V}$ sets $x_{i,1} \in X_i$, 
		$c_{i,1}=1$, $\mu_{i, 1}=\mathbf{0}_p$, and $y_{i,1}=g_i(x_{i,1})$.
		\FOR {$t=1,2,\ldots,T$}
		\STATE 
		Each node $j \in \mathcal{V}$ sends its local information $ w_{i j, t} c_{j, t}$, $ w_{i j, t} \mu_{j, t}$, and $ w_{i j, t}y_{j, t}$ to every out-neighbor $i \in \mathcal{N}_{j,t}^{\text{out}}$. After receiving the information from its in-neighbor $j \in \mathcal{N}_{i,t}^{\text{in}}$, each node $i \in \mathcal{V}$ updates $c_{i,t+1}$ according to \eqref{eq:updateofcouplingcit} and then computes $\lambda_{i,t+1}$ according to \eqref{eq:updateofcouplinglambdait1}.
		\STATE Each node $i \in \mathcal{V}$ updates $x_{i,t+1}$ according to \eqref{eq:updateofcouplingxt1}.
		\STATE Each node $i \in \mathcal{V}$ updates $y_{i,t+1}$ according to \eqref{eq:updateofcouplingyk}.
		\STATE Each node $i \in \mathcal{V}$ updates $\mu_{i,t+1}$ according to \eqref{eq:updateofcouplinguk}.
		\ENDFOR
	\end{algorithmic}
\end{algorithm}

	\section{Dynamic Regret and Constraint Violation Bounds}\label{sec: regretanalysis}
	In this section, we provide the dynamic regret and constraint violation bounds of DUST. 
	
	We first present the following lemmas.
	\begin{lemma}\label{lem:sumyit=sumgit}
		Suppose Assumptions~\ref{ass: networkassumption} and \ref{asm:primalcouplingprob} hold. Then,
		for any $t \ge 1$, 
		\begin{align}
			\sum_{i=1}^{N} y_{i,t}&=\sum_{i=1}^N g_i(x_{i,t}),\displaybreak[0] \label{eq:sumyitsumgit}\\
			\|y_{i,t}\| &\le B_y, \label{eq:boundednessofyit}\displaybreak[0]
		\end{align}
		where $B_y=\frac{8N^2F\sqrt{p}}{r}(1+\frac{2}{1-\sigma})+(N+2)F$, $r:= \inf_{t=1,2,...}(\min_{i \in [N]}\{W_t\cdots W_1\mathbf{1}_N\}_i) $, and $\sigma \in (0,1)$ satisfying $r \geq \frac{1}{N^{N B}},~\sigma \leq\left(1-\frac{1}{N^{N B}}\right)^{\frac{1}{N B}}$.
	\end{lemma}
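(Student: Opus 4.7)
My plan is to prove \eqref{eq:sumyitsumgit} by a straightforward induction on $t$, and to prove \eqref{eq:boundednessofyit} by unrolling the $y$-recursion and invoking the standard geometric convergence estimate for products of column-stochastic matrices satisfying Assumption~\ref{ass: networkassumption}.

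For \eqref{eq:sumyitsumgit}, the base case $t=1$ is immediate from the initialization $y_{i,1}=g_i(x_{i,1})$. For the inductive step I would sum \eqref{eq:updateofcouplingyk} over $i\in\mathcal{V}$ and use the column stochasticity of $W_t$ (Assumption~\ref{ass: networkassumption}(\ref{columnstochastic})) to get $\sum_i\sum_j w_{ij,t}y_{j,t}=\sum_j y_{j,t}$, which by the inductive hypothesis equals $\sum_j g_j(x_{j,t})$. The $-g_i(x_{i,t})$ terms then cancel, leaving $\sum_i g_i(x_{i,t+1})$ as required.

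For \eqref{eq:boundednessofyit}, I would iterate \eqref{eq:updateofcouplingyk}. Writing $\Phi(t,s):=W_{t-1}\cdots W_s$ (with $\Phi(t,t):=I$) for the transition matrix acting on each $p$-block via $\Phi(t,s)\otimes I_p$, this yields
\[
y_{i,t}=\sum_{j=1}^N [\Phi(t,1)]_{ij}\,g_j(x_{j,1})+\sum_{s=2}^{t}\sum_{j=1}^N [\Phi(t,s)]_{ij}\bigl(g_j(x_{j,s})-g_j(x_{j,s-1})\bigr).
\]
The crucial ingredient is the push-sum transition estimate: under Assumption~\ref{ass: networkassumption}, the prescribed constants $r$ and $\sigma$ (with $r\ge N^{-NB}$ and $\sigma\le(1-N^{-NB})^{1/(NB)}$) together with a stochastic vector $\phi(t)$ satisfy $\bigl|[\Phi(t,s)]_{ij}-\phi_i(t)\bigr|\le \frac{C}{r}\,\sigma^{t-s}$ for all $i,j$ and some absolute $C$. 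I would substitute this decomposition and split each inner sum over $j$ into a rank-one part $\phi_i(t)\sum_j(\cdot)$ plus a residual. The rank-one contributions telescope in $s$ down to $\phi_i(t)\sum_j g_j(x_{j,t})$, whose norm is at most $NF$ by \eqref{eq:bounofgivalue} and $\mathbf{1}^T\phi(t)=1$, while the residuals, being a geometric sum of terms of order $\tfrac{NF\sqrt{p}}{r}\sigma^{t-s}$ (the $\sqrt{p}$ coming from the Kronecker-product structure), are bounded by a multiple of $\tfrac{N^2 F\sqrt{p}}{r(1-\sigma)}$. Collecting the Abel-type boundary contributions from $g_i(x_{i,t})$, $g_i(x_{i,1})$, and the $s=t$ term yields the $(N+2)F$ piece, and tidying all constants produces the claimed $B_y=\frac{8N^2 F\sqrt{p}}{r}(1+\frac{2}{1-\sigma})+(N+2)F$.

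The main obstacle I expect is the clean bookkeeping in the rank-one/residual decomposition so that the constants collapse into the advertised $B_y$: in particular, propagating the $\sqrt{p}$ factor through the Kronecker structure $W_t\otimes I_p$, applying the push-sum estimate with the exact $r$ and $\sigma$ from the statement rather than generic constants, and ensuring that the telescoping in the rank-one part reduces exactly using \eqref{eq:sumyitsumgit} so that no accumulated-variation term in the $g_i$'s survives.
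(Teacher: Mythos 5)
Your proposal is correct and follows essentially the same route as the proof the paper omits and delegates to Lemma~1 of \cite{c12} and Lemma~4 of \cite{c10}: the tracking identity \eqref{eq:sumyitsumgit} by induction plus column stochasticity, and the bound \eqref{eq:boundednessofyit} by unrolling the recursion into transition matrices $\Phi(t,s)$ and applying the standard perturbed push-sum estimate $|[\Phi(t,s)]_{ij}-\phi_i(t)|\le \frac{8}{r}\sigma^{t-s}$ of \cite{c16}, with the rank-one part telescoping to $\phi_i(t)\sum_j g_j(x_{j,t})$ and the geometric residuals absorbed into $B_y$. Your bookkeeping in fact yields a constant no larger than the stated $B_y$, so the claimed bound follows.
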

	
	Lemma~\ref{lem:sumyit=sumgit} shows that the local estimator $y_{i,t}$ tracks the the sum of local constraint function values at each time $t$. The proof of Lemma~\ref{lem:sumyit=sumgit} is similar to Lemma~1 in \cite{c12} and Lemma~4 in \cite{c10}, and we omit it here.
	
	\begin{lemma}\label{lem:dualconsesusboud}
		Suppose Assumptions~\ref{ass: networkassumption} and \ref{asm:primalcouplingprob} hold. Then,
		for any $t \ge 1$,
		\begin{align}
			\sum_{i=1}^{N}\|\bar{\mu}_t-\lambda_{i,t+1}\| &\le \frac{8N^2B_y\sqrt{p}}{r}\sum_{k=1}^{t}\sigma^{t-k},\displaybreak[0] \label{eq:sumNbarmutlambda}
		\end{align}
		where $\bar{\mu}_t=\frac{1}{N}\sum_{i=1}^{N}\mu_{i,t}$ and $r, \sigma$ are given in Lemma~\ref{lem:sumyit=sumgit}.
	\end{lemma}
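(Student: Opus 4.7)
My plan is to follow the standard perturbed push-sum consensus template, with the twist that the projection $[\,\cdot\,]_+$ in the $\mu$-update must be absorbed into the perturbation so that the underlying recursion becomes linear. Once linearized, the lemma will reduce to the same kind of geometric decay estimate used in Lemma~\ref{lem:sumyit=sumgit}.

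\emph{Linearize the $\mu$-recursion.} Denoting $\phi_{i,t} := \sum_{j \in \mathcal{N}_{i,t}^{\text{in}}} w_{ij,t}\mu_{j,t}$ and introducing the projection correction $e_{i,t+1} := \mu_{i,t+1} - \phi_{i,t} - y_{i,t+1}$, the update \eqref{eq:updateofcouplinguk} reads $\mu_{i,t+1} = [\phi_{i,t}+y_{i,t+1}]_+$. Because $\mu_{j,t}\ge \mathbf{0}_p$ forces $\phi_{i,t}\ge \mathbf{0}_p$ componentwise, a componentwise inspection of $[\,\cdot\,]_+$ yields $\|e_{i,t+1}\|\le \|y_{i,t+1}\|\le B_y$ by Lemma~\ref{lem:sumyit=sumgit}. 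Setting $z_s := y_s+e_s$ in block form in $\mathbb{R}^{Np}$, this rewrites the $\mu$-iteration as the linear perturbed push-sum $\bm{\mu}_{t+1} = (W_t\otimes I_p)\bm{\mu}_t + z_{t+1}$ with initial condition $\bm{\mu}_1=\mathbf{0}_{Np}$.

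\emph{Unroll and expose the consensus error.} Let $\Psi(t,s) := W_t W_{t-1}\cdots W_s$ for $s\le t$ and $\Psi(t,t+1) := I_N$. Iterating the linearized recursion and using the column stochasticity $\sum_j [\Psi(t-1,s)]_{jk}=1$ yields
\[
\phi_{i,t} = \sum_{s=2}^{t}\sum_{k=1}^{N}[\Psi(t,s)]_{ik}\,z_{k,s},\qquad \bar\mu_t = \frac{1}{N}\sum_{s=2}^{t}\sum_{k=1}^{N}z_{k,s}.
\]
Dividing the first identity by $c_{i,t+1}$ and subtracting $\bar\mu_t$ gives the key representation
\[
\lambda_{i,t+1}-\bar\mu_t = \sum_{s=2}^{t}\sum_{k=1}^{N}\Bigl(\frac{[\Psi(t,s)]_{ik}}{c_{i,t+1}}-\frac{1}{N}\Bigr)z_{k,s},
\]
so that the problem is reduced to bounding a weighted sum of scalar push-sum consensus residuals against the vector perturbations $z_{k,s}$.

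\emph{Apply geometric push-sum consensus.} Under Assumption~\ref{ass: networkassumption}, the same Nedi\'c--Olshevsky-type estimate used in Lemma~\ref{lem:sumyit=sumgit} (cf.\ [c12, Lemma~1], [c10, Lemma~4]) controls $|[\Psi(t,s)]_{ik}/c_{i,t+1} - 1/N|$ by a multiple of $\sigma^{t-s}/r$. Summing the representation above over $i$, invoking the triangle inequality and $\|z_{k,s}\|\le 2B_y$ (since $\|e_{k,s}\|\le \|y_{k,s}\|\le B_y$), and tracking the resulting $N$, $\sqrt{p}$, and $1/r$ constants in exactly the same way as in the proof of Lemma~\ref{lem:sumyit=sumgit} produces the asserted bound \eqref{eq:sumNbarmutlambda} after a harmless re-indexing of the geometric sum.

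The main obstacle is the first step: verifying that the projection error is dominated by the $y$-increment so that the nonlinear $\mu$-update still fits the linear perturbed push-sum framework. Once $\|e_{i,t+1}\|\le \|y_{i,t+1}\|$ is established, the remainder is a direct and almost mechanical adaptation of the geometric consensus analysis already carried out for Lemma~\ref{lem:sumyit=sumgit}, and the constants fall out by bookkeeping.
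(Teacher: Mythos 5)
Your argument is correct and is essentially the route the paper takes: the paper's proof simply invokes the perturbed push-sum consensus bound (Lemma~1 of \cite{c16}) with perturbation $\epsilon_{i,t+1}=[\hat{\mu}_{i,t}+y_{i,t+1}]_+-\hat{\mu}_{i,t}$, which is exactly the linearize-then-geometric-decay analysis you reconstruct by unrolling the products $\Psi(t,s)$. One bookkeeping point: bounding your perturbation as $\|y_{i,t+1}\|+\|e_{i,t+1}\|\le 2B_y$ yields the constant $\tfrac{16N^2B_y\sqrt{p}}{r}$ rather than the stated $\tfrac{8N^2B_y\sqrt{p}}{r}$, whereas applying your own componentwise inspection (equivalently, projection nonexpansiveness together with $\phi_{i,t}=[\phi_{i,t}]_+$) directly to the full increment $z_{i,t+1}=[\phi_{i,t}+y_{i,t+1}]_+-\phi_{i,t}$ gives $\|z_{i,t+1}\|\le\|y_{i,t+1}\|\le B_y$ and recovers the exact constant.
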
	
	
	Lemma~\ref{lem:dualconsesusboud} presents a bound on the consensus error of the dual variables whose proof refers to Lemma~1 in \cite{c16}. The results in Lemma~\ref{lem:sumyit=sumgit}--\ref{lem:dualconsesusboud} involve a number parameters of network, such as the number of nodes $N$ and network connectivity factor $B$, which eventually influence the dynamic regret and constraint violation bounds through the following lemma.
	
	\begin{lemma}\label{lem:mubart1normandmubartbound}
		Suppose Assumptions~\ref{ass: networkassumption} and \ref{asm:primalcouplingprob} hold. Then, 
		for any $t \ge 1$ and arbitrary $\tilde{x}_{i,t} \in X_i$, $i \in \mathcal{V}$, 
		\begin{align}
			&	\frac{N}{2}\|\bar{\mu}_{t+1}\|^2\!-\!\frac{N}{2}\|\bar{\mu}_{t}\|^2  \displaybreak[0] \notag\\
			&\le\!(\frac{N}{2}\!\!+\!\!\frac{N}{r})B_y^2\!+\!\frac{NG^2\alpha_t^2}{4\eta_t}\!+\!(2B_y\!\!+\!\!2F)\!\sum_{i=1}^{N}\|\bar{\mu}_t\!-\!\lambda_{i,t+1}\|\displaybreak[0]\notag\\
			&+\sum_{i=1}^N \alpha_t\partial f_{i,t}(x_{i,t})^T\!(\tilde{x}_{i,t}\!-\!x_{i,t})+\sum_{i=1}^N\langle \bar{\mu}_t, ~g_i(\tilde{x}_{i,t})\rangle\displaybreak[0] \notag\\ 
			&+\sum_{i=1}^N \eta_t(\|x_{i,t}\!-\tilde{x}_{i,t}\|^2\!\!-\!\|x_{i,t+1}\!-\!\tilde{x}_{i,t}\|^2).\displaybreak[0] \label{eq:barmut1barmutnormbound}
		\end{align}
	\end{lemma}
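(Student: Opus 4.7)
The plan is to split the proof into a primal part arising from the strong convexity of the $x_{i,t+1}$-update \eqref{eq:updateofcouplingxt1} and a dual part arising from the projection non-expansiveness of the $\mu_{i,t+1}$-update \eqref{eq:updateofcouplinguk}, and then glue them together through Lemma~\ref{lem:dualconsesusboud}.

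For the primal part, I would observe that the objective in \eqref{eq:updateofcouplingxt1} is $2\eta_t$-strongly convex in $x_i$ (since $\lambda_{i,t+1}\ge 0$ makes $\lambda_{i,t+1}^T g_i(\cdot)$ convex, and the proximal term supplies the strong convexity). The first-order optimality condition at the test point $\tilde{x}_{i,t}\in X_i$, combined with the decomposition $\alpha_t\partial f_{i,t}(x_{i,t})^T(x_{i,t+1}-\tilde{x}_{i,t}) = \alpha_t\partial f_{i,t}(x_{i,t})^T(x_{i,t+1}-x_{i,t})+\alpha_t\partial f_{i,t}(x_{i,t})^T(x_{i,t}-\tilde{x}_{i,t})$ and Young's inequality $|\alpha_t\partial f_{i,t}(x_{i,t})^T(x_{i,t}-x_{i,t+1})|\le \frac{\alpha_t^2G^2}{4\eta_t}+\eta_t\|x_{i,t+1}-x_{i,t}\|^2$ (using \eqref{eq:boundsibradientoffigi}), cancels the $\eta_t\|x_{i,t+1}-x_{i,t}\|^2$ term. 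Summing over $i$ and then swapping $\lambda_{i,t+1}^T g_i(\cdot)$ for $\bar{\mu}_t^T g_i(\cdot)$ on both $g_i(\tilde{x}_{i,t})$ and $g_i(x_{i,t+1})$ at a Cauchy--Schwarz cost of $F\sum_i\|\bar{\mu}_t-\lambda_{i,t+1}\|$ each (using \eqref{eq:bounofgivalue}) yields the primal inequality
$\sum_i\bar{\mu}_t^T g_i(x_{i,t+1})\le 2F\sum_i\|\bar{\mu}_t-\lambda_{i,t+1}\|+\sum_i\alpha_t\partial f_{i,t}(x_{i,t})^T(\tilde{x}_{i,t}-x_{i,t})+\frac{N\alpha_t^2 G^2}{4\eta_t}+\sum_i\bar{\mu}_t^T g_i(\tilde{x}_{i,t})+\sum_i\eta_t(\|\tilde{x}_{i,t}-x_{i,t}\|^2-\|\tilde{x}_{i,t}-x_{i,t+1}\|^2)$.

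For the dual part, I would start from $\|\mu_{i,t+1}\|^2\le\|u_i+y_{i,t+1}\|^2$ (projection non-expansiveness at $0$, with $u_i:=\sum_j w_{ij,t}\mu_{j,t}=c_{i,t+1}\lambda_{i,t+1}$) and apply Jensen's inequality $N\|\bar{\mu}_{t+1}\|^2\le\sum_i\|\mu_{i,t+1}\|^2$. Column stochasticity gives $\sum_i u_i=N\bar{\mu}_t$ and Lemma~\ref{lem:sumyit=sumgit} gives $\sum_i y_{i,t+1}=\sum_i g_i(x_{i,t+1})$, so expanding about $\bar{\mu}_t$ yields the algebraic identity $\sum_i\|u_i+y_{i,t+1}\|^2=N\|\bar{\mu}_t\|^2+2\sum_i\bar{\mu}_t^T g_i(x_{i,t+1})+\sum_i\|u_i-\bar{\mu}_t+y_{i,t+1}\|^2$. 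Substituting this back produces
$\frac{N}{2}\|\bar{\mu}_{t+1}\|^2-\frac{N}{2}\|\bar{\mu}_t\|^2\le\sum_i\bar{\mu}_t^T g_i(x_{i,t+1})+\tfrac{1}{2}\sum_i\|u_i-\bar{\mu}_t+y_{i,t+1}\|^2$. I would then bound the residual by $\|y_{i,t+1}\|\le B_y$ (Lemma~\ref{lem:sumyit=sumgit}) together with the decomposition $u_i-\bar{\mu}_t=c_{i,t+1}(\lambda_{i,t+1}-\bar{\mu}_t)+(c_{i,t+1}-1)\bar{\mu}_t$ and the push-sum invariants $\sum_i c_{i,t+1}=N$ and $c_{i,t+1}\ge r$, to obtain a bound of the form $(\tfrac{N}{2}+\tfrac{N}{r})B_y^2+2B_y\sum_i\|\bar{\mu}_t-\lambda_{i,t+1}\|$.

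Combining the two parts, the $\sum_i\bar{\mu}_t^T g_i(x_{i,t+1})$ on the right-hand side of the dual inequality is replaced using the primal inequality, yielding exactly \eqref{eq:barmut1barmutnormbound}; the consensus-error coefficient $2B_y+2F$ is the sum of the dual ($2B_y$) and primal ($2F$) contributions. The main obstacle is the residual $\sum_i\|u_i-\bar{\mu}_t+y_{i,t+1}\|^2$: since $u_i$ is a column-stochastic (not row-stochastic) combination of the $\mu_{j,t}$, this quantity does not shrink purely by the consensus of $\lambda_{i,t+1}$ around $\bar{\mu}_t$, and the term $(c_{i,t+1}-1)\bar{\mu}_t$ must be handled by carefully exploiting the average $\sum_i(c_{i,t+1}-1)=0$ together with the uniform lower bound $c_{i,t+1}\ge r$ behind Lemma~\ref{lem:dualconsesusboud}; this is the technical heart of the argument.
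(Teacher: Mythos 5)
Your primal half is sound and essentially mirrors the paper: strong convexity of the per-node surrogate $S_{i,t}(x_i,\lambda_{i,t+1})$ at the minimizer $x_{i,t+1}$, Young's inequality to absorb $\alpha_t\partial f_{i,t}(x_{i,t})^T(x_{i,t+1}-x_{i,t})$ into $\frac{\alpha_t^2G^2}{4\eta_t}+\eta_t\|x_{i,t+1}-x_{i,t}\|^2$, and an $F\sum_i\|\bar{\mu}_t-\lambda_{i,t+1}\|$ cost for swapping $\lambda_{i,t+1}$ and $\bar{\mu}_t$. The genuine gap is in your dual half, and it is not merely the "technical heart" you defer — the step as proposed cannot give the claimed bound. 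After Jensen and the (correct) expansion $\sum_i\|u_i+y_{i,t+1}\|^2=N\|\bar{\mu}_t\|^2+2\sum_i\bar{\mu}_t^Tg_i(x_{i,t+1})+\sum_i\|u_i-\bar{\mu}_t+y_{i,t+1}\|^2$, the residual contains, via $u_i-\bar{\mu}_t=c_{i,t+1}(\lambda_{i,t+1}-\bar{\mu}_t)+(c_{i,t+1}-1)\bar{\mu}_t$, the contribution $\frac{1}{2}\|\bar{\mu}_t\|^2\sum_i(c_{i,t+1}-1)^2$ plus cross terms of order $\|\bar{\mu}_t\|\sum_i|c_{i,t+1}-1|\,B_y$. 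The identity $\sum_i(c_{i,t+1}-1)=0$ does not help inside a sum of squares, and over unbalanced (merely column-stochastic) networks $c_{i,t+1}$ stays bounded away from $1$, so the residual grows like $\|\bar{\mu}_t\|^2$ — which is exactly the quantity the lemma is trying to control and which the paper explicitly does not assume bounded. Your claimed bound $(\frac{N}{2}+\frac{N}{r})B_y^2+2B_y\sum_i\|\bar{\mu}_t-\lambda_{i,t+1}\|$ is therefore false in general, and carrying the true residual forward would put a multiplicative factor $>1$ in front of $\|\bar{\mu}_t\|^2$ and destroy the telescoping used in Theorems~\ref{thm:TVdynamicregret}--\ref{thm:WiSlaterCV}.

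The paper's proof avoids ever squaring unnormalized dual quantities. It writes the projection step as $\mu_{i,t+1}=\hat{\mu}_{i,t}+\epsilon_{i,t+1}$ with innovation $\epsilon_{i,t+1}=[\hat{\mu}_{i,t}+y_{i,t+1}]_+-\hat{\mu}_{i,t}$, so column stochasticity gives the exact average recursion $\bar{\mu}_{t+1}=\bar{\mu}_t+\frac{1}{N}\sum_i\epsilon_{i,t+1}$; the only squared residual is then $\|\frac{1}{N}\sum_i\epsilon_{i,t+1}\|^2\le B_y^2$, since $\|\epsilon_{i,t+1}\|\le\|y_{i,t+1}\|\le B_y$ by non-expansiveness. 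The cross term $\sum_i\epsilon_{i,t+1}^T(\bar{\mu}_t-\lambda)$ is converted into $\sum_iy_{i,t+1}^T(\lambda_{i,t+1}-\lambda)$ plus $O(B_y+F)\sum_i\|\bar{\mu}_t-\lambda_{i,t+1}\|$ using the projection inequalities $(\epsilon_{i,t+1}-y_{i,t+1})^T(\mu_{i,t+1}-c_{i,t+1}\lambda)\le0$ and $\epsilon_{i,t+1}^T(\hat{\mu}_{i,t}-\mu_{i,t+1})\le0$ together with the normalization $\lambda_{i,t+1}=\hat{\mu}_{i,t}/c_{i,t+1}$ and $r\le c_{i,t+1}\le N$, so only the \emph{normalized} disagreements $\|\bar{\mu}_t-\lambda_{i,t+1}\|$ (controlled by Lemma~\ref{lem:dualconsesusboud}) ever appear, never $(c_{i,t+1}-1)\bar{\mu}_t$ or $\|\mu_{i,t}-\bar{\mu}_t\|$. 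To repair your argument you would need to replace the Jensen/second-moment route by this average-plus-bounded-innovation route (or otherwise prove a consensus bound on the unnormalized $\mu_{i,t}$, which the push-sum setting does not provide).
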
	
	\begin{proof}
		See Appendix~\ref{prf:proofofbarmut1barmutbound}.
	\end{proof}
	
	Lemma~\ref{lem:mubart1normandmubartbound} establishes the relationship between the bound on dual variables and the first-order information of the local functions, where the former involves constraint violations and the latter is related to the dynamic regret bound. By choosing $\tilde{x}_{i,t}$ appropriately and utilizing the convexity of local functions as well as Lemmas~\ref{lem:sumyit=sumgit}--\ref{lem:dualconsesusboud}, we obtain the dynamic regret and constraint violation bounds from Lemma~\ref{lem:mubart1normandmubartbound}.
	
	\begin{theorem} \label{thm:TVdynamicregret}
		Suppose Assumptions~\ref{ass: networkassumption} and \ref{asm:primalcouplingprob} hold. If we set  
		\begin{equation} \label{eq:TVdynamicstepsizechosen}
			\alpha_t=\sqrt{t}, \; \eta_t=t,
		\end{equation} then for any $t \ge 1$, 
		\begin{align}
			\operatorname{Reg}(T)&=\mathcal{O}(\sqrt{T})+\mathcal{O}(V_T), \displaybreak[0] \label{eq:TVdynamicresult}
		\end{align}
		where $V_T:=\sum_{t=1}^T\sqrt{t}\sum_{i=1}^N\left\|x_{i, t+1}^*-x_{i, t}^*\right\|$ and $x_{i,t}^*$ is the $i$-th component of the optimal solution $\mathbf{x}_{t}^*:=\underset{\mathbf{x} \in \mathcal{X}}{\arg \min } \sum_{i=1}^N f_{i, t}\left(x_i\right)$ to problem \eqref{eq:primalprob}.
	\end{theorem}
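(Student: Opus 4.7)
The plan is to specialize Lemma~\ref{lem:mubart1normandmubartbound} at $\tilde{x}_{i,t}=x_{i,t}^{*}$ and convert the resulting inequality into a per-step bound on the instantaneous regret. Convexity of $f_{i,t}$ upper bounds $\partial f_{i,t}(x_{i,t})^{T}(x_{i,t}^{*}-x_{i,t})$ by $f_{i,t}(x_{i,t}^{*})-f_{i,t}(x_{i,t})$, and primal feasibility $\sum_{i}g_{i}(x_{i,t}^{*})\le\mathbf{0}_{p}$ together with $\bar{\mu}_{t}\ge\mathbf{0}_{p}$ (inherited from the projection in \eqref{eq:updateofcouplinguk}) gives $\sum_{i}\langle\bar{\mu}_{t},g_{i}(x_{i,t}^{*})\rangle\le 0$. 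Substituting into \eqref{eq:barmut1barmutnormbound} and rearranging yields $\alpha_{t}\sum_{i}(f_{i,t}(x_{i,t})-f_{i,t}(x_{i,t}^{*}))\le A_{t}+\eta_{t}\sum_{i}(\|x_{i,t}-x_{i,t}^{*}\|^{2}-\|x_{i,t+1}-x_{i,t}^{*}\|^{2})-\tfrac{N}{2}(\|\bar{\mu}_{t+1}\|^{2}-\|\bar{\mu}_{t}\|^{2})$, where $A_{t}$ collects the first three terms on the RHS of \eqref{eq:barmut1barmutnormbound}.

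Dividing by $\alpha_{t}=\sqrt{t}$ turns the LHS into the unweighted instantaneous regret, and summing over $t=1,\dots,T$ produces $\operatorname{Reg}(T)$. The dual telescope $\sum_{t=1}^{T}\alpha_{t}^{-1}(\|\bar{\mu}_{t+1}\|^{2}-\|\bar{\mu}_{t}\|^{2})$ is non-negative by Abel summation, since $1/\alpha_{t}$ is non-increasing, each $\|\bar{\mu}_{t}\|^{2}\ge 0$, and $\bar{\mu}_{1}=\mathbf{0}_{p}$; hence that contribution can be discarded. For $\sum_{t}A_{t}/\alpha_{t}$, Lemma~\ref{lem:dualconsesusboud} bounds $\sum_{i}\|\bar{\mu}_{t}-\lambda_{i,t+1}\|$ uniformly via the geometric series $\sum_{k=1}^{t}\sigma^{t-k}\le 1/(1-\sigma)$, and combined with $\alpha_{t}^{2}/\eta_{t}=1$ and the standard estimate $\sum_{t=1}^{T}1/\sqrt{t}=\mathcal{O}(\sqrt{T})$, this gives $\sum_{t=1}^{T}A_{t}/\alpha_{t}=\mathcal{O}(\sqrt{T})$.

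The main obstacle is the quadratic telescope $\sum_{t=1}^{T}\sqrt{t}\sum_{i}(\|x_{i,t}-x_{i,t}^{*}\|^{2}-\|x_{i,t+1}-x_{i,t}^{*}\|^{2})$, because the comparator $x_{i,t}^{*}$ shifts between consecutive summands. To compensate, I expand $\|x_{i,t+1}-x_{i,t}^{*}\|^{2}=\|(x_{i,t+1}-x_{i,t+1}^{*})+(x_{i,t+1}^{*}-x_{i,t}^{*})\|^{2}$ and apply Cauchy--Schwarz together with $\|x_{i,t+1}-x_{i,t+1}^{*}\|\le R$ to obtain $\|x_{i,t+1}-x_{i,t}^{*}\|^{2}\ge\|x_{i,t+1}-x_{i,t+1}^{*}\|^{2}-2R\|x_{i,t+1}^{*}-x_{i,t}^{*}\|$. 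Substituting and reindexing $\sum_{t}\sqrt{t}\|x_{i,t+1}-x_{i,t+1}^{*}\|^{2}$ produces an Abel telescope whose coefficients $\sqrt{t}-\sqrt{t-1}=\mathcal{O}(1/\sqrt{t})$, so that portion contributes $\mathcal{O}(\sqrt{T})$ after applying the diameter bound, while the residual $2R\sum_{t}\sqrt{t}\sum_{i}\|x_{i,t+1}^{*}-x_{i,t}^{*}\|$ is exactly $2RV_{T}$. Assembling the three bounds yields $\operatorname{Reg}(T)\le\mathcal{O}(\sqrt{T})+\mathcal{O}(V_{T})$, as claimed. The $\sqrt{t}$ weighting in the definition of $V_{T}$ arises precisely from the ratio $\eta_{t}/\alpha_{t}=\sqrt{t}$; without the shift-and-reindex compensation, the moving target would incur an $\mathcal{O}(T)$ penalty from the growing stepsize $\eta_{t}=t$, destroying the sublinear rate.
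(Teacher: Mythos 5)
Your proposal is correct and follows essentially the same route as the paper's proof: specialize Lemma~\ref{lem:mubart1normandmubartbound} at $\tilde{x}_{i,t}=x_{i,t}^*$, use convexity and feasibility to drop the coupling term, divide by $\alpha_t$ and sum, discard the dual telescope by monotonicity of $1/\alpha_t$ with $\bar{\mu}_1=\mathbf{0}_p$, bound the consensus and $\alpha_t^2/\eta_t$ terms by $\mathcal{O}(\sqrt{T})$, and handle the moving-comparator quadratic telescope by re-centering at $x_{i,t+1}^*$ with a diameter/Cauchy--Schwarz bound, which is exactly the paper's treatment of $S_5$ (its identity $(\mathbf{x}_{t+1}^*-\mathbf{x}_t^*)^T(\mathbf{x}_{t+1}^*+\mathbf{x}_t^*-2\mathbf{x}_{t+1})$ is just the unexpanded form of your estimate). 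No gaps; the constants differ only cosmetically.
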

	\begin{proof}
		See Appendix~\ref{prf:TVdynamicregret}.
	\end{proof}
	
	Theorem \ref{thm:TVdynamicregret} shows that the dynamic regret grows sublinearly with $T$ if the accumulated variation of the optimal sequence $V_T$ is sublinear, which requires the online problem \eqref{eq:primalprob} does not change too drastically. Intuitively, the sublinearity guarantees that $\operatorname{Reg}(T)/T$ converges to $0$ as $T$ goes to infinity. It should be noted that if $V_T=0$, the result reduces to the static regret that achieves an $\mathcal{O}(\sqrt{T})$ bound. In addition, Theorem~\ref{thm:TVdynamicregret} indicates that DUST has stronger results on other existing algorithms applicable to coupled inequality constraints. Specifically, compared with \cite{c9, c10} that are also applicable to unbalanced networks with column stochastic matrices, the static regret bound in \cite{c9} is strictly greater than $\mathcal{O}(\sqrt{T})$ and the dynamic regret bound in \cite{c10} is $\mathcal{O}(T^{\frac{1}{2}+2\kappa})+\mathcal{O}(V_T)$, $\kappa \in (0, 1/4)$ that is worse than ours. Moreover, \cite{c12} assumes the boundedness of $\mu_{i,t}$ while Theorem~\ref{thm:TVdynamicregret} does not. Though \cite{c11,c13,c14,c15} can also handle coupled inequality constraints, \cite{c11,c13,c14,c15} are only applied to balanced networks with doubly stochastic mixing matrices and or \cite{c11} only focus on the static regret, which is weaker than our result. The dynamic regret bounds in \cite{c14,c15} depend on the accumulated error of optimal sequence $\sqrt{T}\sum_{t=1}^T\sum_{i=1}^N\left\|x_{i, t+1}^*-x_{i, t}^*\right\|$, which is large than $V_T$ in \eqref{eq:TVdynamicresult}, leading to a larger bound than \eqref{eq:TVdynamicresult}. 
	
	Next, we present a bound on constraint violation. 
	
	\begin{theorem}\label{thm:WioutSlaterCV}
		Suppose all the conditions in Theorem~\ref{thm:TVdynamicregret} hold. Then for any $t \ge 1$,
		\begin{align}
			\operatorname{Reg}^c(T) =\mathcal{O}(T^{\frac{3}{4}}). \displaybreak[0] \label{eq:TVwithoutSlaterconstraintvio}
		\end{align}	
	\end{theorem}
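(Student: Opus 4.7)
The plan is to split the argument into two halves: first derive a lower bound on $N\bar{\mu}_{T+1}$ in terms of the accumulated constraint violations, and then derive an upper bound on $\|\bar{\mu}_{T+1}\|$ using Lemma~\ref{lem:mubart1normandmubartbound}. Combining them converts any $\mathcal{O}(T^\beta)$ bound on $\|\bar{\mu}_{T+1}\|$ into the same bound on $\operatorname{Reg}^c(T)$, and the stepsize choice $\alpha_t=\sqrt{t},\,\eta_t=t$ is exactly what makes $\beta=3/4$.

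For the lower bound I would start from the update \eqref{eq:updateofcouplinguk} and use $[\,\cdot\,]_+\ge(\cdot)$ componentwise to obtain $\mu_{i,t+1}\ge\sum_{j}w_{ij,t}\mu_{j,t}+y_{i,t+1}$. Summing over $i$, invoking the column stochasticity of $W_t$, and applying Lemma~\ref{lem:sumyit=sumgit} gives $N\bar{\mu}_{t+1}\ge N\bar{\mu}_t+\sum_{i=1}^{N}g_i(x_{i,t+1})$. Telescoping from $1$ to $T$ with the initialization $\mu_{i,1}=\mathbf{0}_p$ yields $N\bar{\mu}_{T+1}\ge \sum_{t=2}^{T+1}\sum_{i=1}^{N}g_i(x_{i,t})$. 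Since the index shift costs at most $2NF$ by \eqref{eq:bounofgivalue}, and since $\bar{\mu}_{T+1}\ge\mathbf{0}_p$, the componentwise inequality $[a+b]_+\le a+[b]_+$ (valid for $a\ge \mathbf{0}_p$) gives $\operatorname{Reg}^c(T)\le N\|\bar{\mu}_{T+1}\|+2NF$.

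For the upper bound I would fix an arbitrary feasible point $\tilde{\mathbf{x}}\in\mathcal{X}$ (which exists by assumption), apply Lemma~\ref{lem:mubart1normandmubartbound} with $\tilde{x}_{i,t}=\tilde{x}_i$, and sum from $t=1$ to $T$. The left side telescopes to $\tfrac{N}{2}\|\bar{\mu}_{T+1}\|^2$, and since $\bar{\mu}_t\ge\mathbf{0}_p$ while $\sum_i g_i(\tilde{x}_i)\le\mathbf{0}_p$, the cross term $\sum_{i}\langle\bar{\mu}_t,g_i(\tilde{x}_i)\rangle$ is nonpositive and can be dropped. The constant term contributes $\mathcal{O}(T)$, and with $\alpha_t^2/\eta_t=1$ the second term also contributes $\mathcal{O}(T)$. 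The consensus term is summable since Lemma~\ref{lem:dualconsesusboud} combined with $\sum_{t=1}^{T}\sum_{k=1}^{t}\sigma^{t-k}\le T/(1-\sigma)$ gives $\mathcal{O}(T)$. The subgradient term is bounded by $\sum_{t=1}^{T}\sqrt{t}\,NGR=\mathcal{O}(T^{3/2})$ via Cauchy--Schwarz, the bound \eqref{eq:boundsibradientoffigi}, and the diameter $R$. Finally, the proximal term is handled by Abel summation: with $\eta_t=t$ and $A_{i,t}:=\|x_{i,t}-\tilde{x}_i\|^2$, one checks that $\sum_{t=1}^{T}t(A_{i,t}-A_{i,t+1})=A_{i,1}+\sum_{t=2}^{T}A_{i,t}-TA_{i,T+1}\le TR^2$, again $\mathcal{O}(T)$. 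Altogether $\tfrac{N}{2}\|\bar{\mu}_{T+1}\|^2=\mathcal{O}(T^{3/2})$, hence $\|\bar{\mu}_{T+1}\|=\mathcal{O}(T^{3/4})$, which through the first step yields $\operatorname{Reg}^c(T)=\mathcal{O}(T^{3/4})$.

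The main obstacle is the bookkeeping in the Abel summation for the proximal term: naively one would fear that $\eta_t=t$ growing linearly produces an extra factor of $T$, but the telescoping cancellation is what keeps it at $\mathcal{O}(T)$ and prevents the overall bound from deteriorating to $T^{5/4}$. A secondary subtlety is verifying that the boundary correction between $\sum_{t=1}^{T}$ and $\sum_{t=2}^{T+1}$ in the telescoping step does not interact badly with the $[\,\cdot\,]_+$ operator; the componentwise monotonicity $[a+b]_+\le a+[b]_+$ for $a\ge\mathbf{0}_p$ is exactly what makes it harmless.
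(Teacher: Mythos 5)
Your proposal is correct and follows essentially the same route as the paper: you relate $\operatorname{Reg}^c(T)$ to $N\|\bar{\mu}_{T+1}\|$ by dropping the projection in \eqref{eq:updateofcouplinguk}, using column stochasticity and Lemma~\ref{lem:sumyit=sumgit}, and telescoping, and you then bound $\frac{N}{2}\|\bar{\mu}_{T+1}\|^2$ by summing Lemma~\ref{lem:mubart1normandmubartbound} with a fixed feasible point, with the $NGRT^{3/2}$ subgradient term dominating, exactly as in Appendix~C. The only cosmetic differences are that you handle the index shift with the $2NF$ bound from \eqref{eq:bounofgivalue} instead of the paper's convexity/$NGR$ argument, and you write out the Abel summation for the proximal term that the paper delegates to \eqref{eq:etatVtsumtxtxstar}.
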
	
	\begin{proof}
		See Appendix~\ref{prf:WioutSlaterCV}.
	\end{proof}
	
	Theorem~\ref{thm:WioutSlaterCV} shows that DUST achieves a sublinear constraint violation bound. The result is superior than \cite{c9,c10, c11} whose constraint violation bound is strictly greater than $\mathcal{O}(T^{\frac{3}{4}})$. Theorem~2 holds without assuming the Slater's condition that allows us to handle equality constraints by converting an equality into two inequalities. The following theorem shows that $Reg^c(T)$ is improved to $\mathcal{O}(\sqrt{T})$ if all local constraint functions $g_i, \forall i \in \mathcal{V}$ satisfy the Slater's condition, which is commonly assumed in \cite{c10,c11,c14}.
	
	\begin{assumption} \label{asm: slatercondition}
		There exists a constant $\epsilon >0$ and a point $\hat{x}_i \in\operatorname{relint}(X_i),~\forall i \in \mathcal{V}$ such that $\sum_{i=1}^{N}g_{i}(\hat{x}_{i}) \leq-\epsilon\mathbf{1}_p$.
	\end{assumption}
	
	\begin{theorem} \label{thm:WiSlaterCV}
		Suppose Assumptions~\ref{ass: networkassumption}$-$\ref{asm: slatercondition} hold. If we set  $\eta_t$ and $V_t$ as these in Theorem~\ref{thm:TVdynamicregret}. Then, for any $t \ge 1$,
		\begin{align}
			\operatorname{Reg}^c(T) =\mathcal{O}(\sqrt{T}). \displaybreak[0] \label{eq:TVwithSlaterconstraintvio}
		\end{align}	
	\end{theorem}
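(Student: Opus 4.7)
The plan is to specialize Lemma~\ref{lem:mubart1normandmubartbound} to the constant choice $\tilde{x}_{i,t}=\hat{x}_i$ for every $i\in\mathcal{V}$ and every $t$, where $\hat{x}_i\in\operatorname{relint}(X_i)$ is the Slater point supplied by Assumption~\ref{asm: slatercondition}. The essential new ingredient compared to the proof of Theorem~\ref{thm:WioutSlaterCV} is a negative drift term that Slater's condition introduces into the recursion. Since $\bar{\mu}_t\ge\mathbf{0}_p$ by the projection in \eqref{eq:updateofcouplinguk} and $\sum_{i=1}^{N}g_{i}(\hat{x}_{i})\le-\epsilon\mathbf{1}_p$, the inequality $\|a\|_1\ge\|a\|_2$ on the nonnegative orthant yields
\[
\sum_{i=1}^{N}\langle\bar{\mu}_t,g_i(\hat{x}_i)\rangle\le -\epsilon\mathbf{1}_p^{T}\bar{\mu}_t\le -\epsilon\|\bar{\mu}_t\|,
\]
which is the new favorable term on the right-hand side.

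Next I would telescope the resulting inequality from $t=1$ to $T$. With $\alpha_t=\sqrt{t}$ and $\eta_t=t$: the constant term and the ratio $NG^2\alpha_t^2/(4\eta_t)=NG^2/4$ each contribute $\mathcal{O}(T)$; the consensus-error term, combined with Lemma~\ref{lem:dualconsesusboud} and the geometric estimate $\sum_{t=1}^{T}\sum_{k=1}^{t}\sigma^{t-k}\le T/(1-\sigma)$, contributes $\mathcal{O}(T)$; and the telescoping distances $\sum_{t}\eta_t\sum_{i}(\|x_{i,t}-\hat{x}_i\|^2-\|x_{i,t+1}-\hat{x}_i\|^2)$ collapse via Abel summation (using $\eta_t-\eta_{t-1}=1$) to $\sum_{i,t}\|x_{i,t}-\hat{x}_i\|^{2}\le NTR^2=\mathcal{O}(T)$. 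By convexity of each $f_{i,t}$, the subgradient inner-product sum is upper bounded by $\sum_{t}\alpha_t[f_t(\hat{\mathbf{x}})-f_t(\mathbf{x}_t)]$; once this quantity is shown to be $\mathcal{O}(T)$, collecting everything yields
\[
\tfrac{N}{2}\|\bar{\mu}_{T+1}\|^{2}+\epsilon\sum_{t=1}^{T}\|\bar{\mu}_t\|\le\mathcal{O}(T),
\]
and hence the pointwise bound $\|\bar{\mu}_T\|=\mathcal{O}(\sqrt{T})$.

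The final step transports this dual bound into the desired constraint violation estimate. Summing \eqref{eq:updateofcouplinguk} over $i$, using the column stochasticity of $W_t$, the tracking identity $\sum_{i}y_{i,t}=\sum_{i}g_i(x_{i,t})$ from Lemma~\ref{lem:sumyit=sumgit}, and the fact that $[\,\cdot\,]_+$ dominates its argument component-wise, one obtains $N\bar{\mu}_{t+1}\ge N\bar{\mu}_t+\sum_{i}g_i(x_{i,t+1})$. Telescoping from $t=1$ and using $\mu_{i,1}=\mathbf{0}_p$ gives $\sum_{t=1}^{T}\sum_{i}g_i(x_{i,t})\le\sum_{i}g_i(x_{i,1})+N\bar{\mu}_T$ component-wise. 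Applying $[\,\cdot\,]_+$ (which is monotone and $1$-Lipschitz), the triangle inequality, and the bound $\|g_i(x_{i,1})\|\le F$ from \eqref{eq:bounofgivalue}, we arrive at $\operatorname{Reg}^c(T)\le NF+N\|\bar{\mu}_T\|=\mathcal{O}(\sqrt{T})$, which is the claim.

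The main obstacle is controlling the weighted subgradient sum $\sum_{t}\alpha_t[f_t(\hat{\mathbf{x}})-f_t(\mathbf{x}_t)]$ by $\mathcal{O}(T)$ rather than the naive $\mathcal{O}(T^{3/2})$ implied by a pointwise Lipschitz estimate. This is the only step that is strictly more delicate than the argument for Theorem~\ref{thm:WioutSlaterCV}: I expect to handle it via Abel summation against the anytime version of the dynamic regret bound of Theorem~\ref{thm:TVdynamicregret} (using that static regret against the fixed feasible comparator $\hat{\mathbf{x}}$ is dominated by dynamic regret), leveraging the self-tuning structure of the stepsizes $\alpha_t/\eta_t=1/\sqrt{t}$, which is the hallmark feature enabling $\sqrt{T}$-rate bounds in online learning.
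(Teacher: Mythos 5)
There is a genuine gap, and it sits exactly where you flag the ``main obstacle.'' Your plan telescopes \eqref{eq:barmut1barmutnormbound} with the fixed Slater comparator $\hat{x}_i$ over the whole horizon and needs $\sum_{t=1}^{T}\alpha_t\,\partial f_t(\mathbf{x}_t)^T(\hat{\mathbf{x}}-\mathbf{x}_t)\le\sum_{t=1}^{T}\sqrt{t}\,[f_t(\hat{\mathbf{x}})-f_t(\mathbf{x}_t)]=\mathcal{O}(T)$; without that, the right-hand side is $\mathcal{O}(T^{3/2})$ and the quadratic term only returns $\|\bar{\mu}_{T+1}\|=\mathcal{O}(T^{3/4})$, i.e.\ nothing beyond Theorem~\ref{thm:WioutSlaterCV}. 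The proposed rescue via the regret bound cannot work: Theorem~\ref{thm:TVdynamicregret} controls $\sum_t[f_t(\mathbf{x}_t)-f_t(\mathbf{x}_t^*)]$, whereas here you need an upper bound on $f_t(\hat{\mathbf{x}})-f_t(\mathbf{x}_t)=[f_t(\hat{\mathbf{x}})-f_t(\mathbf{x}_t^*)]+[f_t(\mathbf{x}_t^*)-f_t(\mathbf{x}_t)]$, and the first bracket is a per-step quantity of order $\Theta(1)$ whenever the Slater point is simply a suboptimal feasible point (which is the generic case). Weighted by $\alpha_t=\sqrt{t}$ this contributes $\Theta(T^{3/2})$, so the claimed $\mathcal{O}(T)$ bound is false in general, and no Abel-summation rearrangement of the regret inequality removes it. Note also that even if you settled for reading the Slater term off the left-hand side, $\epsilon\sum_{t=1}^{T}\|\bar{\mu}_t\|\le\mathcal{O}(T^{3/2})$ only bounds the \emph{average} of $\|\bar{\mu}_t\|$ by $\mathcal{O}(\sqrt{T})$; the constraint-violation bound \eqref{eq:sumTsumNgitgit1} needs the \emph{final} iterate $\|\bar{\mu}_{T+1}\|$, and per-step growth of $\|\bar{\mu}_t\|$ is only bounded by $B_y$, so an average bound does not transfer to the endpoint.

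The paper avoids a full-horizon telescope for exactly this reason. In Lemma~\ref{lem:withSCboundofbarmut} it sums \eqref{eq:barmut1barmutnormbound} with $\tilde{x}_{i,t}=\hat{x}_i$ only over a sliding window of length $\tau=\lceil\sqrt{t}\rceil$: over such a window the problematic terms ($GR\sum_s\alpha_s\approx GRt$, $\eta_{t+\tau-1}R^2\approx R^2 t$) accumulate to only $\mathcal{O}(t)$, while the Slater drift contributes $-\epsilon\tau\|\bar{\mu}_t\|$, yielding the conditional decrease \eqref{eq:batmutaddtboundbytau}: whenever $\|\bar{\mu}_t\|\ge\theta_t(\tau)=\mathcal{O}(\sqrt{t})$, the dual norm drops by $\epsilon\tau/2$ over the window. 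An exponential (drift-plus-Lipschitz) argument in the style of Lemma~6 of \cite{c18} then converts this conditional drift into the pointwise bound $\|\bar{\mu}_t\|=\mathcal{O}(\sqrt{t})$, after which \eqref{eq:sumTsumNgitgit1} gives $\operatorname{Reg}^c(T)=\mathcal{O}(\sqrt{T})$ as in your final step (your last paragraph is essentially the paper's \eqref{eq:sumTsumNgitgit1} and is fine modulo minor bookkeeping of the index shift). To repair your proof you would need to replace the global telescope and the unproven $\mathcal{O}(T)$ claim with some such windowed drift analysis; as written, the argument does not establish $\|\bar{\mu}_{T+1}\|=\mathcal{O}(\sqrt{T})$.
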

	\begin{proof}
		See Appendix~\ref{prf:WiSlaterCV}.
	\end{proof}
	
	\begin{remark}\label{rmk:BNeffects}
		To the best of our knowledge, DUST is the first distributed algorithm achieving  $\mathcal{O}(\sqrt{T})$ dynamic regret bound and $\mathcal{O}(T^{\frac{3}{4}})$ constraint violation bound for DOCO problems with coupled inequality constraints over unbalanced networks, let alone achieving $\mathcal{O}(\sqrt{T})$ constraint violation bound. Unlike \cite{c20,c21,c22} whose constraint violation bounds are affected by the dynamic optimal decisions $x_t^*$, $\forall t \ge 1$, our results are independent of them. Furthermore, from Appendix~\ref{prf:TVdynamicregret}--\ref{prf:WiSlaterCV}, we observe that the bounds of $\operatorname{Reg}(T)$ and $\operatorname{Reg}^c(T)$
		in \eqref{eq:TVdynamicresult}$-$\eqref{eq:TVwithSlaterconstraintvio} are proportional to $\frac{N^6}{r^3(1-\sigma)^3}$ with $r \geq \frac{1}{N^{N B}},~\sigma \leq\left(1-\frac{1}{N^{N B}}\right)^{\frac{1}{N B}}$. 
		Note that $\frac{N^6}{r^3(1-\sigma)^3}$ increases as $N$ and $B$ grow and  $\operatorname{Reg}(T)$ and $\operatorname{Reg}^c(T)$ increase accordingly. This statement is verified via a numerical example in the following section.
	\end{remark}
	
	\section{Numerical Example} \label{sec: numericalexperiments}
		\begin{figure}[t] 	
		\subfigure[]
		{
			\begin{minipage}{0.466\linewidth}
				\centering 
				\includegraphics[height=3.47cm,width=4cm]{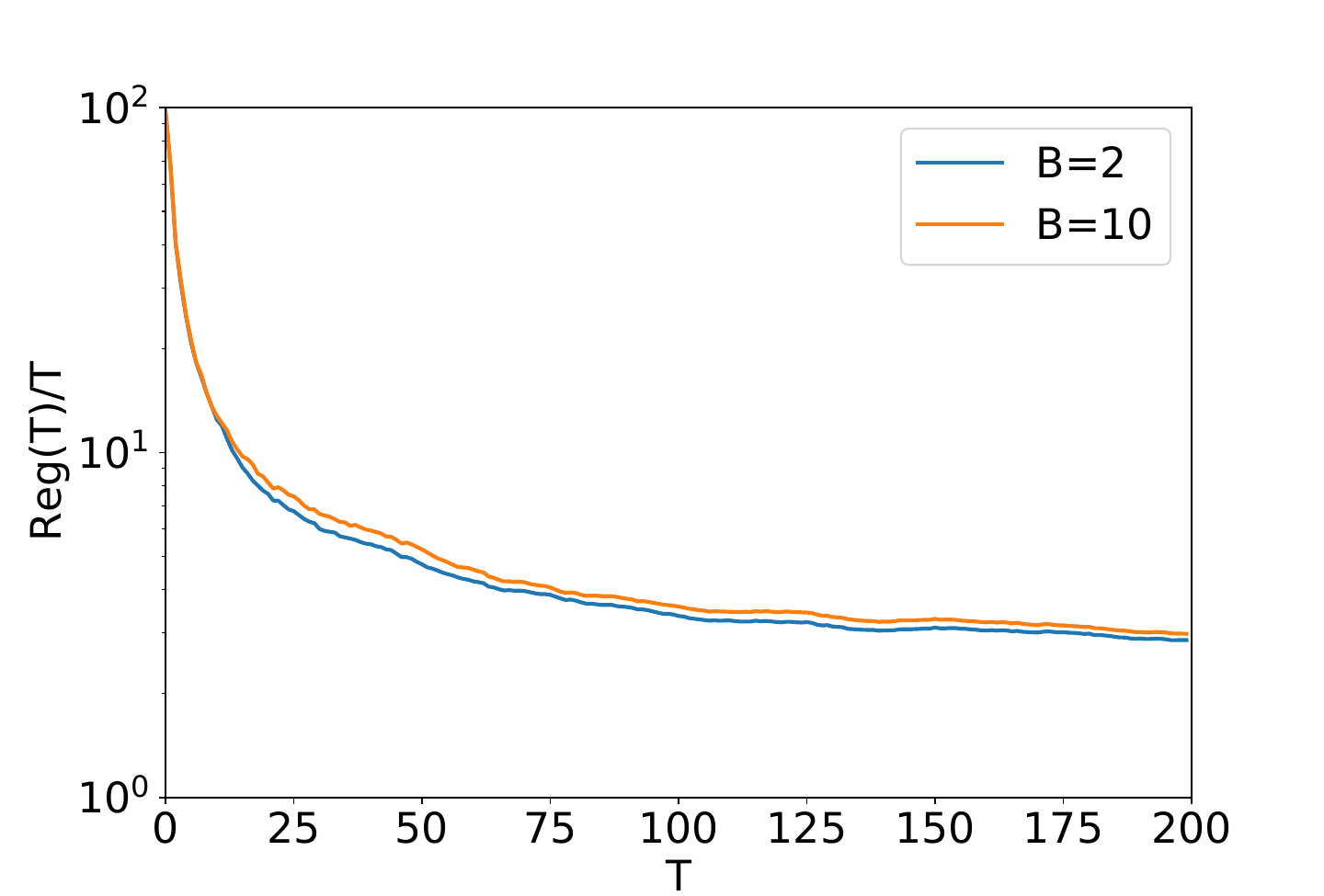}
				\label{cooperation}
			\end{minipage}
		}	
		\subfigure[]
		{
			\begin{minipage}{0.466\linewidth}
				\centering      
				\includegraphics[height=3.47cm,width=4cm]{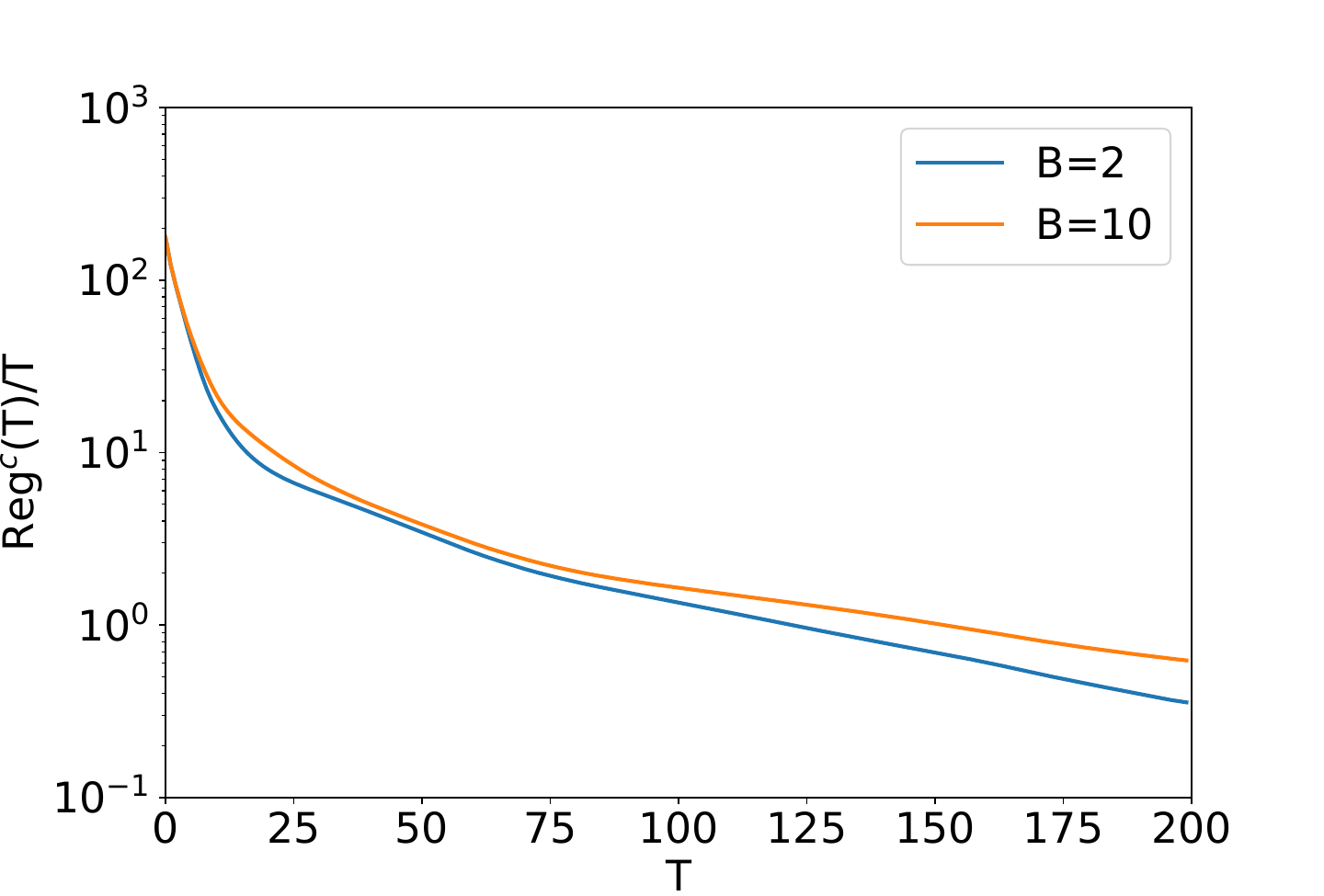}
				\label{eachlatency}
			\end{minipage}
		}	
		\caption{Effects of network connectivity factor $B$ on (a) $\operatorname{Reg}(T)/T$ and (b) $\operatorname{Reg}^c(T)/T$ when $N=10$.  } 
		\label{differentB}  
	\end{figure}
	%
	\begin{figure}[t] 	
		\subfigure[]
		{
			\begin{minipage}{0.466\linewidth}
				\centering 
				\includegraphics[height=3.47cm,width=4cm]{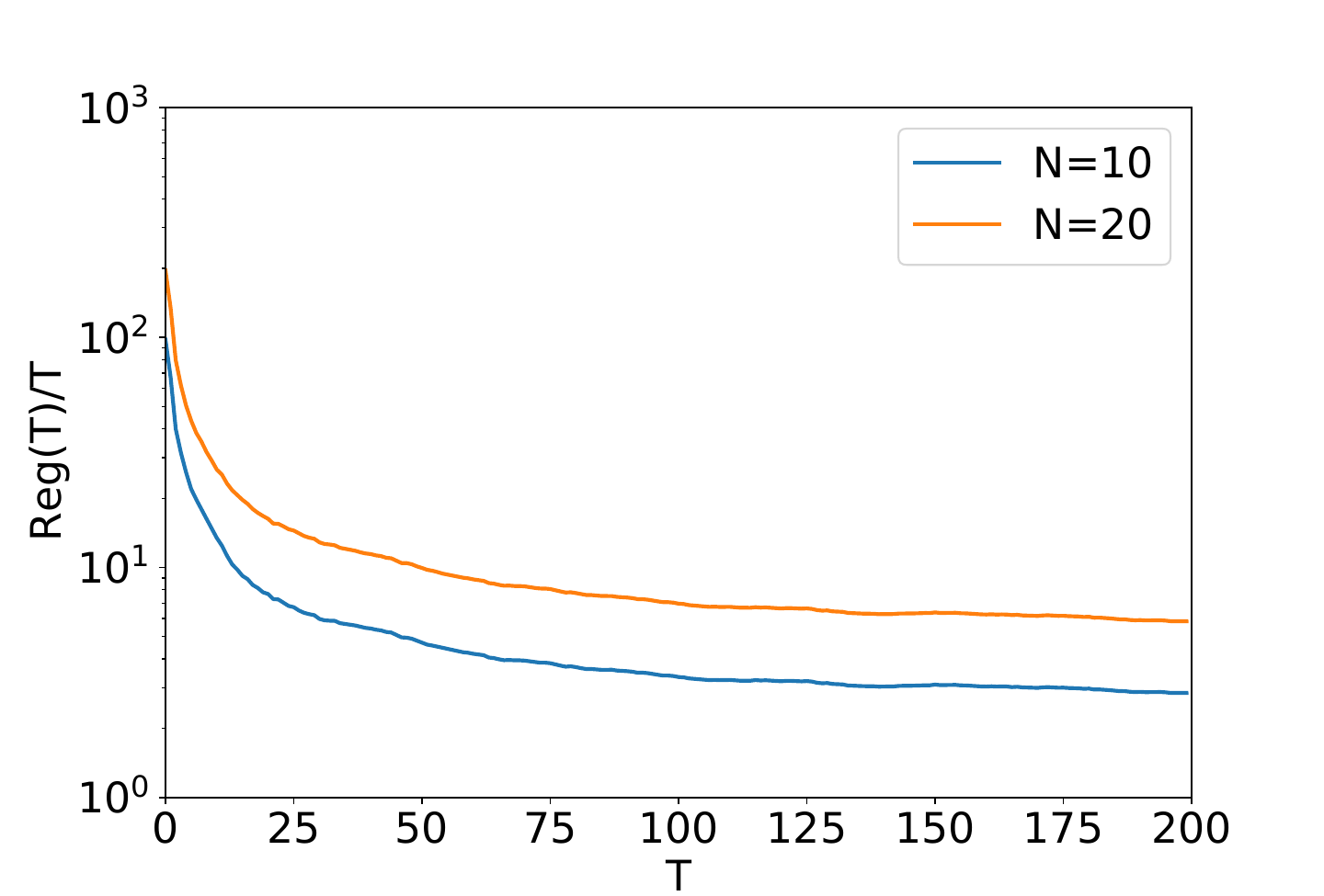}
				\label{regdifferentN}
			\end{minipage}
		}	
		\subfigure[]
		{
			\begin{minipage}{0.466\linewidth}
				\centering      
				\includegraphics[height=3.47cm,width=4cm]{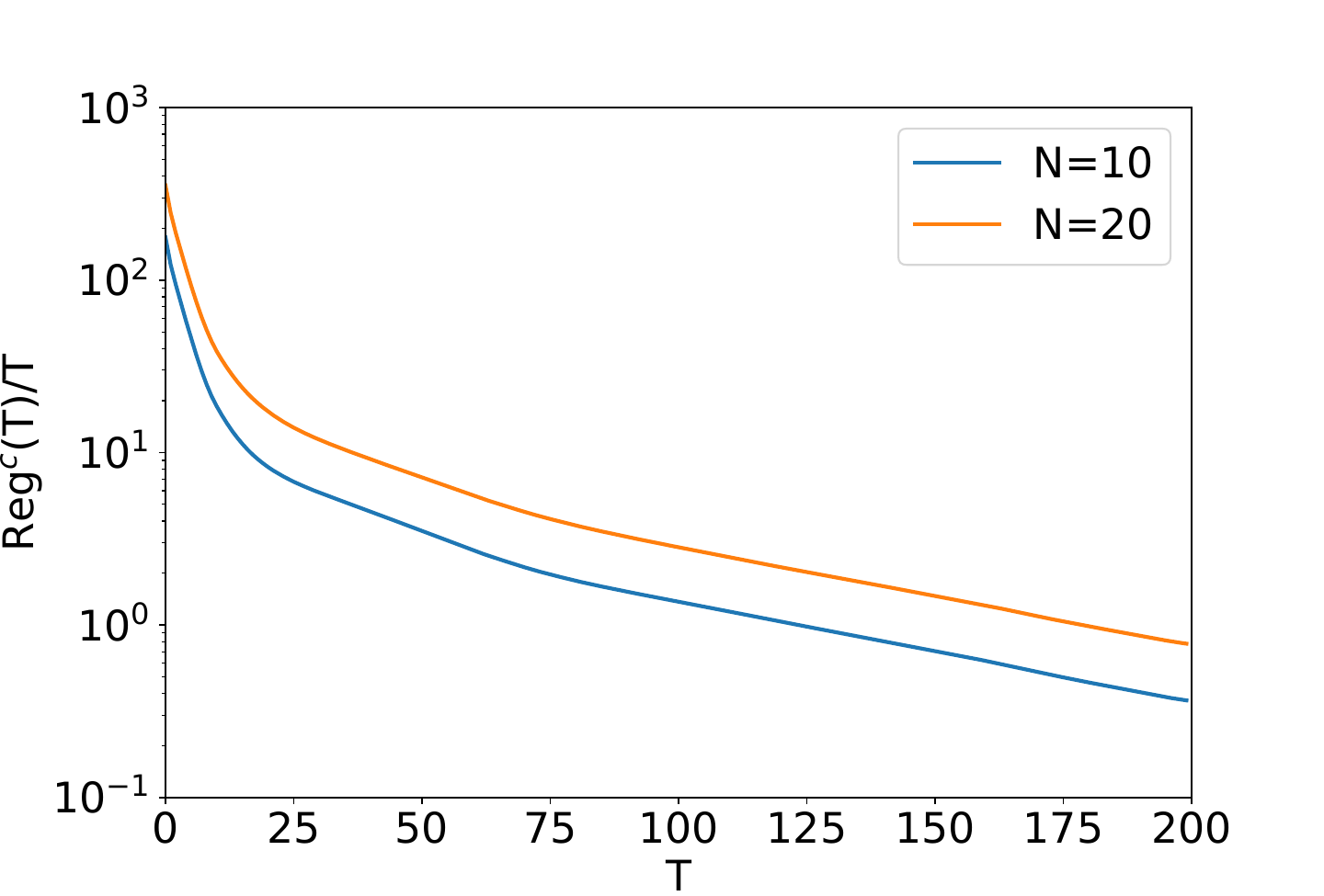}
				\label{cvdifferentN}
			\end{minipage}
		}	
		\caption{Effects of node number $N$ on (a) $\operatorname{Reg}(T)/T$ and (b) $\operatorname{Reg}^c(T)/T$ when $B=2$.  } 
		\label{differentN}  
	\end{figure}

	\begin{figure}[t] 	
		\subfigure[]
		{
			\begin{minipage}{0.466\linewidth}
				\centering 
				\includegraphics[height=3.47cm,width=4cm]{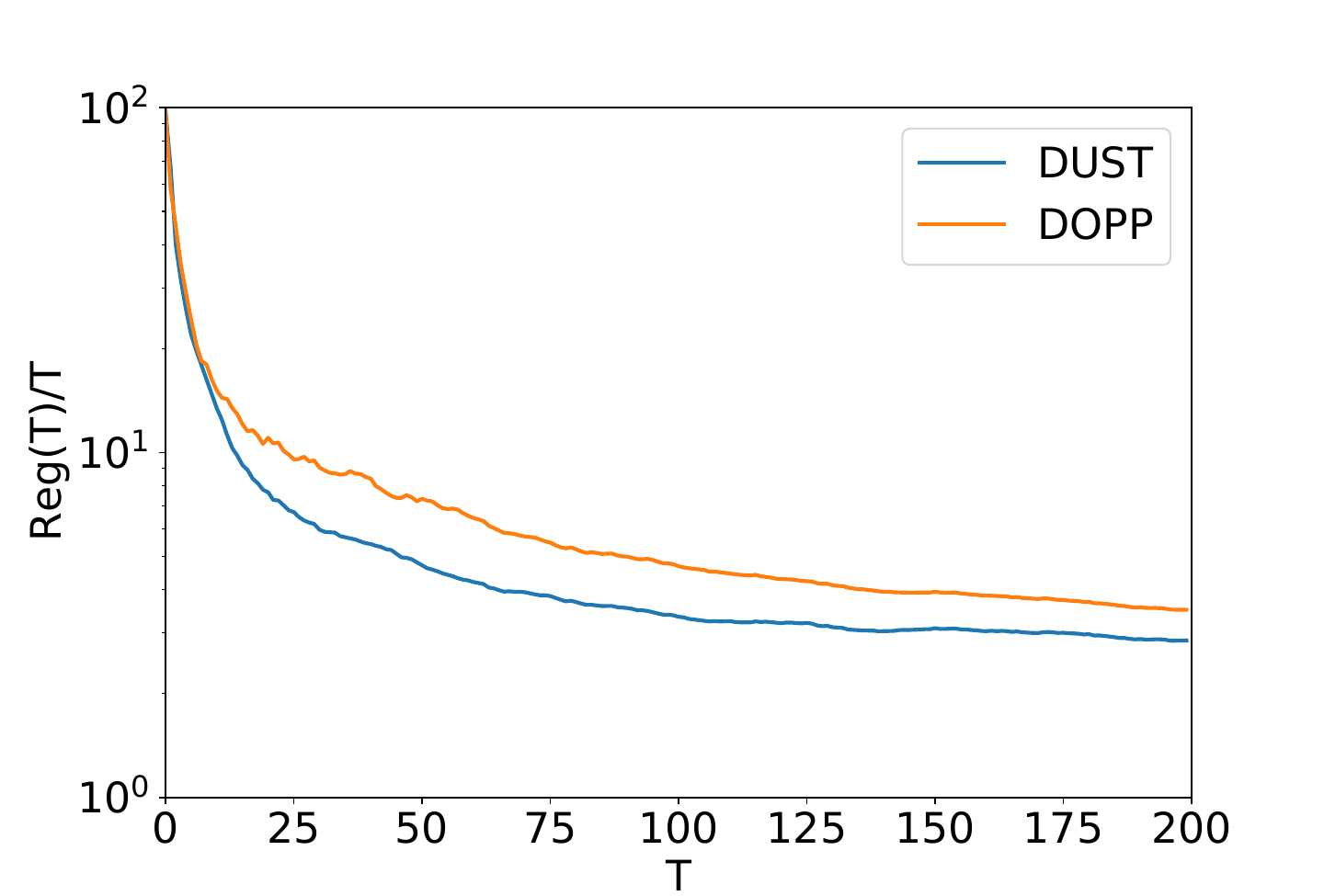}
				\label{regcompare}
			\end{minipage}
		}	
		\subfigure[]
		{
			\begin{minipage}{0.466\linewidth}
				\centering      
				\includegraphics[height=3.47cm,width=4cm]{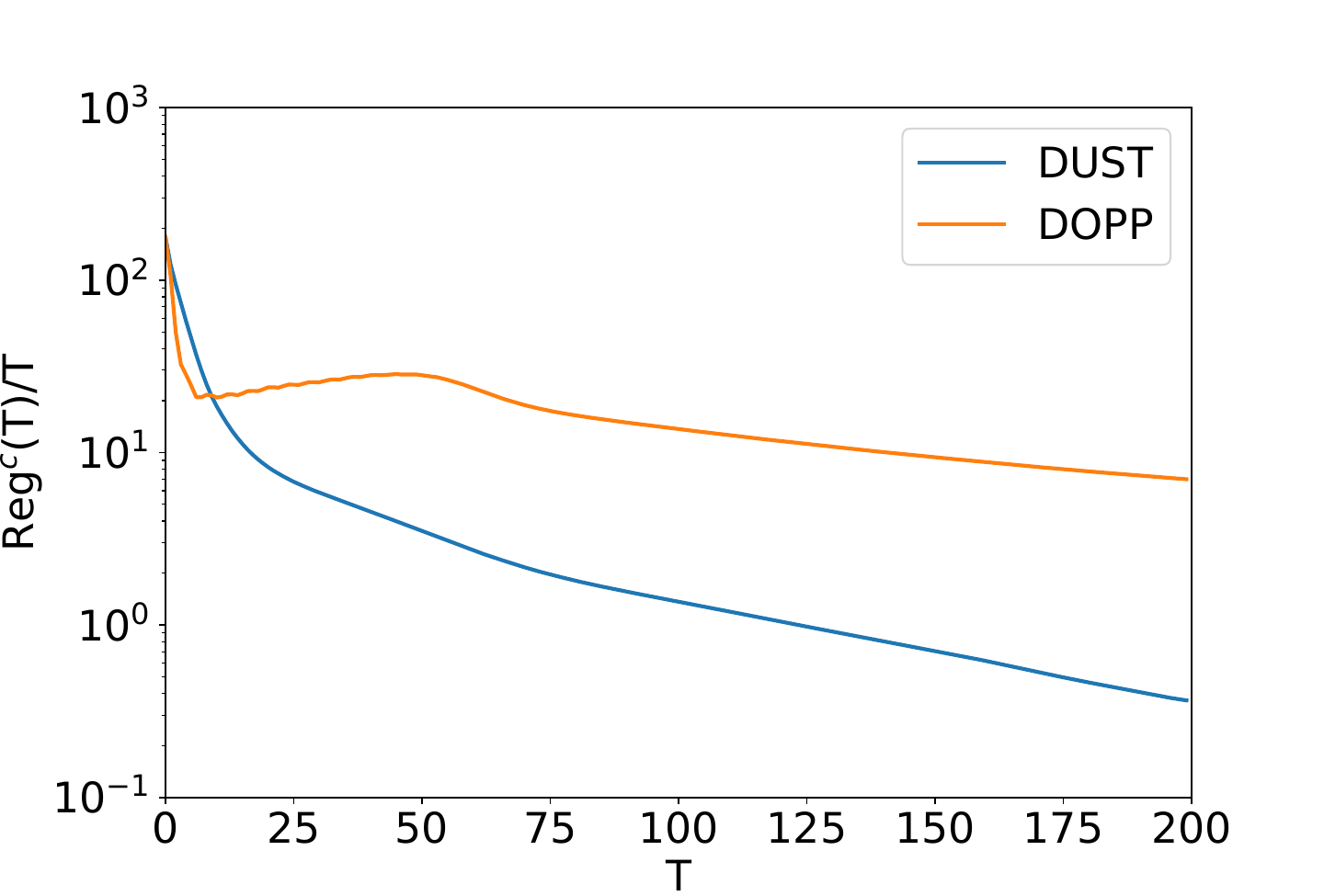}
				\label{cvcompare}
			\end{minipage}
		}	
		\caption{ Comparison between DUST and DOPP \cite{c10} on (a) $\operatorname{Reg}(T)/T$ and (b) $\operatorname{Reg}^c(T)/T$ when $N=10$, $B=4$.} 
		\label{compare}  
	\end{figure}
	We apply DUST to solve the plug-in electric vehicles
	(PEVs) charging problem whose goal is to find an optimal charging schedule over a time period by minimizing the sum of the time-varying local cost of all PEVs while satisfying certain constraints at each time instance\cite{c10,c13}. At each time $t$, the PEVs charging problem can be cast as: 
	\begin{equation} \label{eq:PEVsproblem}
		\begin{array}{cl}
			\underset{\substack{x_i \in X_i, \forall i \in \mathcal{V} }}{\operatorname{minimize}} ~\!\!\!& \sum_{i=1}^{N} c_{i,t}(x_{i})\\  
			{\operatorname{ subject~to }}\!\!\! & \sum_{i=1}^{N}A_{i}x_{i}-D/N \leq \mathbf{0}_p,
		\end{array}
	\end{equation} 
	where $x_i$ represents the charging rate of PEV $i$,  $c_{i,t}(x_{i}):=a_{i,t}/2\|x_i\|^2+b_{i,t}^Tx_i$ is local cost function of PEV $i$ at time $t$ \cite{c23}, and $X_i$ is the local constraint set involving maximum charging power and desired final state of charge of PEV $i$. The coupled constraint $\sum_{i=1}^{N}A_{i}x_{i}-D/N \leq \mathbf{0}_p$ guarantees that the aggregate charging power of all PEVs is less than maximum power that the network can deliver. In our simulation, each $a_{i,t}$ and $b_{i,t}$ are randomly generated from uniform distributions $[0.5, 1]$ and $\left(0, 1\right]^{d_i}$, respectively, where $d_i=24$ is the dimension of $x_i$.
	According to the set-up in \cite{c24}, there are 48 coupled inequalities, i.e., the rate aggregation
	matrix $A_i \in  \mathbb{R}^{48 \times 24}$ and each local set $X_i$ is determined by 197 inequalities. The values of $A_i$, $D$, and $X_i$ are obtained by referring to \cite{c24}. 
	

	To investigate the convergence performance of DUST and the effects of network connectivity factor $B$ and node number $N$ on the convergence performance of DUST, we run DUST with different $B$ and different $N$. Fig.~\ref{differentB} and Fig.~\ref{differentN} plot the evolution of $\operatorname{Reg}(T)/T$ and  $\operatorname{Reg}^c(T)/T$ with $B=2,10$ when $N$ is fixed as 10 and $N=10,20$ when $B$ is fixed as 2, respectively. From the two figures, we observe that DUST is able to achieve sublinear convergence in terms of dynamic regret and constraint violations, which validates our theoretical results. In addition,  it can be seen that the convergence speed becomes slower if $B$ or $N$ increases. This fact is consistent with our analysis in Remark~\ref{rmk:BNeffects}.
	

	We compare DUST with the distributed online primal-dual push-sum (DOPP) in \cite{c10}, which is also developed based on column stochastic mixing matrices. For a fair comparison, we set $\kappa=0.2$ for DOPP so that it achieves satisfactory convergence performance. Fig.~\ref{compare} presents the evolution of $\operatorname{Reg}(T)/T$ and  $\operatorname{Reg}^c(T)/T$ of DUST and DOPP with $N=10$, $B=4$. It is evident that DUST achieves smaller dynamic regret and constraint violations than DOPP and thus validates the superior performance of DUST.

	\section{Conclusion}\label{sec:conclusion}
	We have constructed a distributed dual subgradient tracking algorithm (DUST) to solve the DOCO problem with a globally coupled inequality constraint over unbalanced networks. To develop it, we integrate the push-sum technique into the dual subgradient method. The subgradients with respect to dual variables can be estimated by primal constraint violations, which is tracked by local auxiliary variables, enabling distributed implementation. We show that DUST achieves sublinear dynamic regret and constraint violations if the accumulated variation of the optimal sequence is also sublinear. Our theoretical results are stronger than those of existing distributed algorithms applicable to unbalanced networks, which is verified via numerical experiments. 

	\appendix
	
	For ease of exposition, let $\hat{\mu}_{i, t}=\sum_{j \in \mathcal{N}_{i,t}^{\text{in}}} w_{i j, t} \mu_{j, t}$, $\bar{\mu}_t=\frac{1}{N}\sum_{i=1}^{N}\mu_{i,t}$, and $\epsilon_{i,t+1}=[\hat{\mu}_{i, t}+y_{i,t+1}]_+-\hat{\mu}_{i, t}$. With these notations, we rewrite \eqref{eq:updateofcouplinglambdait1} and \eqref{eq:updateofcouplinguk} as
%

	\subsection{Proof of Lemma~\ref{lem:mubart1normandmubartbound}} \label{prf:proofofbarmut1barmutbound}
	Before we prove Lemma~\ref{lem:mubart1normandmubartbound}, the following auxiliary lemma is first presented. 
	\begin{lemma}\label{lem:epsionlboundcitbound}
		Suppose Assumptions~\ref{ass: networkassumption} and \ref{asm:primalcouplingprob} hold. Then
		\begin{align}
			\|\epsilon_{i,t+1}\|\le B_y,~r \le c_{i,t} \le N,~\forall t\ge 1, \displaybreak[0] \label{eq:boundofepsiloncit}
		\end{align}	
		where $B_y$ and $r$ are given in Lemma~\ref{lem:sumyit=sumgit}.
	\end{lemma}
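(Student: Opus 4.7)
The plan is to establish the two bounds independently, using only the column stochasticity of $W_t$, the nonnegativity preserved by the update \eqref{eq:updateofcouplinguk}, and the norm bound on $y_{i,t}$ from Lemma~\ref{lem:sumyit=sumgit}.

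For $\|\epsilon_{i,t+1}\|\le B_y$, I would first check by induction on $t$ that $\mu_{j,t}\ge\mathbf{0}_p$ for every $j$: the base case follows from $\mu_{j,1}=\mathbf{0}_p$, and the inductive step from the componentwise projection $[\cdot]_+$ in \eqref{eq:updateofcouplinguk}. Since $w_{ij,t}\ge 0$, this yields $\hat{\mu}_{i,t}\ge\mathbf{0}_p$, and hence $\hat{\mu}_{i,t}=[\hat{\mu}_{i,t}]_+$. Rewriting $\epsilon_{i,t+1}=[\hat{\mu}_{i,t}+y_{i,t+1}]_+-[\hat{\mu}_{i,t}]_+$ and invoking the nonexpansiveness of the projection onto the closed convex cone $\mathbb{R}_+^p$ then gives $\|\epsilon_{i,t+1}\|\le\|y_{i,t+1}\|\le B_y$, where the last inequality is \eqref{eq:boundednessofyit}.

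For $r\le c_{i,t}\le N$, I would stack \eqref{eq:updateofcouplingcit} into the vector form $c_{t+1}=W_t c_t$ with $c_1=\mathbf{1}_N$, so that $c_{t+1}=W_t W_{t-1}\cdots W_1\mathbf{1}_N$. The upper bound comes from column stochasticity: summing the update over $i$ gives $\sum_{i=1}^N c_{i,t}=N$ for every $t\ge 1$, and a routine induction on nonnegativity of the entries of $W_t$ guarantees $c_{i,t}\ge 0$, whence $c_{i,t}\le N$. The lower bound for $t\ge 2$ is immediate from the definition $r=\inf_t\min_i\{W_t\cdots W_1\mathbf{1}_N\}_i$ in Lemma~\ref{lem:sumyit=sumgit}, and the edge case $t=1$ reduces to $c_{i,1}=1\ge r$ since $r\le 1$ by the sum identity just established.

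The argument is essentially a short calculation in each half and presents no serious obstacle; the only subtlety worth flagging is the two nonnegativity chains (for $\mu_{j,t}$ and for $c_{i,t}$), without which the projection rewriting and the column-stochastic identity could not be applied directly.
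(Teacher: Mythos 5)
Your proof is correct and follows essentially the same route as the paper: the bound on $\|\epsilon_{i,t+1}\|$ comes from the nonexpansiveness of the projection onto $\mathbb{R}_+^p$ after observing $\hat{\mu}_{i,t}\ge\mathbf{0}_p$, and your explicit nonnegativity induction for $\mu_{j,t}$ just makes precise a step the paper leaves implicit (its displayed chain even contains index typos). For $r\le c_{i,t}\le N$ the paper merely cites Lemma~3 of \cite{c10}; your direct argument via $c_{t+1}=W_t\cdots W_1\mathbf{1}_N$, the column-sum invariance $\sum_i c_{i,t}=N$ with nonnegativity, and the definition of $r$ (with $r\le 1$ covering the $t=1$ case) is exactly the standard content of that cited result.
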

	\begin{proof}
		According to the property of projection $\|P_S(z_1)-P_S(z_2)\| \le \|z_1-z_2\|$, $\forall z_1, z_2 \in \mathbb{R}^n$, we have $\|\epsilon_{i,t+1}\|=\|[\hat{\mu}_{i, t}+y_{i,t+1}]_+-\hat{\mu}_{i, t-1} \| \le \|\hat{\mu}_{i, t-1}+y_{i,t}-\hat{\mu}_{i, t-1}\|\le \|y_{i,t}\| \le B_y$. The proof of boundedness of $c_{i,t}$ can refer to Lemma~3 in \cite{c10}.
	\end{proof}
	
Summing	$\mu_{i, t+1}=\hat{\mu}_{i, t}+\epsilon_{i,t+1}$from $i=1$ to $N$ yields
	\begin{align}
		\bar{\mu}_{t+1}=\bar{\mu}_t+\frac{1}{N}\sum_{i=1}^{N}\epsilon_{i,t+1}, \label{eq:barmut1barmut}
	\end{align}
	which gives for all $\lambda \in \mathbb{R}_+^p$,
	\begin{align}
		\|\bar{\mu}_{t+1}\!-\!\lambda \|^2 \le \|\bar{\mu}_t\!-\!\lambda \|^2\!+\!\frac{2}{N}\sum_{i=1}^{N}\epsilon_{i,t+1}^T(\bar{\mu}_t\!-\!\lambda)\!+\!B_y^2.\label{eq:barmut1withepsilonbarmut}
	\end{align}
	The last inequality in \eqref{eq:barmut1withepsilonbarmut} follows from \eqref{eq:boundofepsiloncit}.
	Let us now consider the term $\sum_{i=1}^{N}\epsilon_{i,t+1}^T(\bar{\mu}_t-\lambda)$. It can be obtained	
	\begin{align}
		&\sum_{i=1}^{N}\!\epsilon_{i,t+1}^T(\bar{\mu}_t\!-\!\lambda)\!=\!\!\sum_{i=1}^{N}\!\epsilon_{i,t+1}^T(\frac{(\hat{\mu}_{i, t}\!-\!c_{i,t+1}\lambda)}{c_{i,t+1}}\!+\!\bar{\mu}_t\!-\!\lambda_{i,t+1})\displaybreak[0]\notag\\
		&=\!\sum_{i=1}^{N}\!\big(\epsilon_{i,t+1}\!\!-\!\!y_{i,t+1})^T\!\frac{(\mu_{i, t+1}\!\!-\!\!c_{i,t+1}\lambda)}{c_{i,t+1}}\!+\!\epsilon_{i,t+1}^T(\bar{\mu}_t\!\!-\!\!\lambda_{i,t+1})\displaybreak[0] \notag\\
		&+\!(\epsilon_{i,t+1}\!-\!y_{i,t+1})^T\frac{(\hat{\mu}_{i, t}\!-\!\mu_{i, t+1})}{c_{i,t+1}}\big)\!+\!y_{i,t+1}^T(\lambda_{i,t+1}\!-\!\lambda)\! \displaybreak[0]\notag\\
		&\le \!\sum_{i=1}^{N}\!y_{i,t+1}^T(\frac{(\mu_{i, t+1}\!\!-\!\!\hat{\mu}_{i, t})}{c_{i,t+1}}\!+\!\lambda_{i,t+1}\!\!-\!\!\lambda)+\!\epsilon_{i,t+1}^T(\bar{\mu}_t\!\!-\!\!\lambda_{i,t+1}) \displaybreak[0]\notag\\ 
		& \le \!\frac{N}{r}\!B_y^2\!+\!\sum_{i=1}^{N}\!y_{i,t+1}^T(\lambda_{i,t+1}\!\!-\!\!\lambda)\!+\!B_y\!\sum_{i=1}^{N}\!\|\bar{\mu}_t\!-\!\lambda_{i,t+1}\|, \displaybreak[0] \label{eq:epsilonmultiplybarmutlambdabound}
	\end{align}	
	where the first equality uses $\lambda_{i,t+1}=\frac{\hat{\mu}_{i, t}}{c_{i,t+1}}$. The first inequality uses: (a) $(\epsilon_{i,t+1}\!-y_{i,t+1})^T(\mu_{i, t+1}\!-\!c_{i,t+1}\lambda) \le 0$ according to the property of projection $(P_S(x)-x)^T(P_S(x)-y) \le 0$,  $\forall x \in \mathbb{R}^n, y \in S$, where $\lambda \in \mathbb{R}_+^p$; (b) $\epsilon_{i,t+1}^T(\hat{\mu}_{i, t}\!-\!\mu_{i, t+1})=(\mu_{i, t+1}-\hat{\mu}_{i, t})^T(\hat{\mu}_{i, t}\!-\!\mu_{i, t+1}) \le 0$. The last inequality uses: (a) the Cauchy–Schwarz inequality; (b) \eqref{eq:boundednessofyit} and \eqref{eq:boundofepsiloncit}.  
	The term $y_{i,t+1}^T(\lambda_{i,t+1}-\lambda)$ in \eqref{eq:epsilonmultiplybarmutlambdabound} can be obtained
	\begin{align}
		&\sum_{i=1}^{N}\!y_{i,t+1}^T(\lambda_{i,t+1}\!-\!\lambda)=\sum_{i=1}^{N}y_{i,t+1}^T(\lambda_{i,t+1}\!-\!\bar{\mu}_t\!+\!\bar{\mu}_t\!-\!\lambda) \displaybreak[0] \notag\\
		&\le B_y\sum_{i=1}^{N}\|\bar{\mu}_t\!-\!\lambda_{i,t+1}\|+\!\sum_{i=1}^N g_i(x_{i,t+1})^T(\bar{\mu}_t\!-\!\lambda)\displaybreak[0] \notag\\ 
		&\!\le\!(\!B_y\!\!+\!\!F)\!\sum_{i=1}^{N}\!\|\bar{\mu}_t\!\!-\!\!\lambda_{i,t+1}\|\!\!+\!\!\sum_{i=1}^N\! g_i(x_{i,t+1})^T\!(\lambda_{i,t+1}\!\!-\!\!\lambda), \label{eq:yitamultiplylambdait1minuslambdabound}
	\end{align}
	where the last inequality utilizes Lemma~\ref{lem:sumyit=sumgit} and \eqref{eq:bounofgivalue}.
	Let $S_{i,t}(x_i,\lambda_i)=\alpha_t\partial f_{i,t}(x_{i,t})^T\!(x_i\!-\!x_{i,t})+\langle \lambda_{i}, ~g_i(x_i)\rangle\!+\eta_t\|x_i\!-\!x_{i,t}\|^2 $.
Obviously, we have $\sum_{i=1}^N g_i(x_{i,t+1})^T(\lambda_{i,t+1}\!-\!\lambda)=\sum_{i=1}^NS_{i,t}(x_{i,t+1},\lambda_{i,t+1})-S_{i,t}(x_{i,t+1},\lambda) \le \sum_{i=1}^N\!S_{i,t}(\tilde{x}_{i,t}, \lambda_{i,t+1})\!\!-\!\!S_{i,t}(x_{i,t+1},\lambda)\!\!-\!\!\eta_t\|x_{i,t+1}\!\!-\!\!\tilde{x}_{i,t}\|^2$, $\forall \tilde{x}_{i,t} \in X_i$, which follows from the $2\eta_t$-strong convexity of $S_{i,t}(x_i,\lambda_{i,t+1})$ with respect to the variable $x_i$. Combing the inequality with $\sum_{i=1}^N\!S_{i,t}(\tilde{x}_{i,t}, \lambda_{i,t+1})\!-\!S_{i,t}(\tilde{x}_{i,t}, \bar{\mu}_t) \le F\!\sum_{i=1}^{N}\!\|\bar{\mu}_t\!-\!\lambda_{i,t+1}\|$ yields
	\begin{align}
		&\sum_{i=1}^N g_i(x_{i,t+1})^T(\lambda_{i,t+1}\!-\!\lambda)\le F\sum_{i=1}^{N}\!\|\bar{\mu}_t\!-\!\lambda_{i,t+1}\|\displaybreak[0]\notag\\
		&+\!\sum_{i=1}^N\!S_{i,t}(\tilde{x}_{i,t}, \bar{\mu}_t)\!-\!\!S_{i,t}(x_{i,t+1},\lambda)\!\!-\!\eta_t\|x_{i,t+1}\!\!-\!\!\tilde{x}_{i,t}\|^2. \displaybreak[0] \label{eq:sumgxit1timeslambdait1bound}
	\end{align} 
	
	Let $\lambda=\mathbf{0}_p$. Imitating the Lemma~4 in \cite{c17} leads to
	$-S_{i,t}(x_{i,t+1},\lambda)=-\big(\alpha_t\sum_{i=1}^{N}\partial f_{i,t}(x_{i,t})^T\!(x_{i,t+1}\!-\!x_{i,t})\!+\!\eta_t\sum_{i=1}^{N}\|x_{i,t+1}\!-\!x_{i,t}\|^2\big) \le \frac{NG^2\alpha_t^2}{4\eta_t}$. By combing this inequality with \eqref{eq:barmut1withepsilonbarmut}-\eqref{eq:sumgxit1timeslambdait1bound}, dividing both sides by $\frac{2}{N}$, and substituting the
	expressions of $S_{i,t}(\tilde{x}_{i,t}, \bar{\mu}_t)$ yield \eqref{eq:barmut1barmutnormbound}. Thus, Lemma~\ref{lem:mubart1normandmubartbound} holds.

	\subsection{Proof of Theorem~\ref{thm:TVdynamicregret}} \label{prf:TVdynamicregret}
	For any $t \ge 1$, let $\tilde{x}_{i,t}=x_{i,t}^*$, $\forall i \in \mathcal{V}$, which results in $\sum_{i=1}^{N}g_i(x_{i,t}^*) \le \mathbf{0}_p$. With $\bar{\mu}_t \ge \mathbf{0}_p$, $\langle\bar{\mu}_t, \sum_{i=1}^{N}g_i(x_{i,t}^*)\rangle \le 0$. By virtual of the convexity of $f_{i,t}$, we have $\sum_{i=1}^{N}\!\alpha_t\partial f_{i,t}(x_{i,t})^T\!(x_{i,t}^*\!-\!x_{i,t})\! \le\! \alpha_t\!\sum_{i=1}^{N}\!f_{i,t}(x_{i,t}^*)\!-\!f_{i,t}(x_{i,t})$. Equipped with these, we divide \eqref{eq:barmut1barmutnormbound} both sides by $\alpha_t$ and  then sum it from $t=1$ to $T$ to obtain
	\begin{align}
		&\sum_{t=1}^{T}\sum_{i=1}^{N} f_{i,t}(x_{i,t})-\sum_{t=1}^{T}\sum_{i=1}^{N} f_{i,t}(x_{i,t}^\star) \le\underbrace{(\frac{N}{2}+\frac{N}{r})\sum_{t=1}^{T}\frac{B_y^2}{\alpha_t}}_{S_1} \displaybreak[0] \notag\\
		&+\underbrace{\sum_{t=1}^{T}\frac{NG^2\alpha_t}{4\eta_t}}_{S_2}+ \frac{N}{2}\underbrace{\sum_{t=1}^{T}\frac{1}{\alpha_t}(\|\bar{\mu}_{t}\|^2-\|\bar{\mu}_{t+1}\|^2)}_{S_3} \displaybreak[0] \notag\\
		&+(2B_y\!+\!2F)\underbrace{\sum_{t=1}^{T}\frac{1}{\alpha_t}\!\sum_{i=1}^{N}\|\bar{\mu}_t\!-\!\lambda_{i,t+1}\|}_{S4} \displaybreak[0] \notag\\
		&+\underbrace{\!\sum_{t=1}^{T}\frac{\eta_t}{\alpha_t}\sum_{i=1}^{N}\left(\|x_{i,t}^\star-x_{i,t}\|^2-\|x_{i,t+1}\!-\!x_{i,t}^\star\|^2\right)}_{S_5}.\label{eq:VtsumtTfitfitstar}
	\end{align}
		Below, we analyze the upper bounds of $S_i$, $i =1, \ldots, 5$. With $\alpha_t=\sqrt{t}$ and $\eta_t=t$, it is easy to obtain
		\begin{align}
			&S_1\le	(NB_y^2+\frac{2NB_y^2}{r})\sqrt{T},~S_2\le\frac{NG^2\sqrt{T}}{2}, \displaybreak[0] \label{eq:sumt1divideVt}\\
			&S_3\!=\!\|\bar{\mu}_{1}\|^2\!\!+\!\!\sum_{t=2}^{T}\!(\frac{1}{\alpha_t}\!-\!\frac{1}{V_{t-1}})\|\bar{\mu}_{t}\|^2\!\!-\!\frac{1}{\alpha_t}\|\bar{\mu}_{T+1}\|^2 \le 0, \displaybreak[0] \label{eq:sumtVtbarmutbarmut1}
		\end{align} 
		where \eqref{eq:sumt1divideVt} follows from $\sum_{t=1}^{T} \frac{1}{\sqrt{t}} \le1+\int_{t=1}^{T}t^{-1/2}dt\le 2\sqrt{T}$ and \eqref{eq:sumtVtbarmutbarmut1} is because $\|\bar{\mu}_{1}\|^2=0$ and $\frac{1}{\alpha_t}-\frac{1}{\alpha_{t-1}} \le 0$.
		From Lemma~\ref{lem:dualconsesusboud}, we have
		\begin{align}
			S_4\le\! \frac{8N^2B_y\sqrt{p}}{r}\!\sum_{t=1}^{T}\frac{1}{\alpha_t}\!\sum_{k=1}^{t}\sigma^{t-k}\le\frac{16N^2B_y\sqrt{p}\sqrt{T}}{r(1-\sigma)}, \label{eq:sumTconsensuserror}
		\end{align}
		where the last inequality in \eqref{eq:sumTconsensuserror} comes from $\!\sum_{t=1}^{T}\frac{1}{\alpha_t}\!\sum_{k=1}^{t}\sigma^{t-k}\le \sum_{t=0}^{T-1}\sigma^{t}\sum_{k=1}^{T}\frac{1}{\alpha_k}$. Let $\mathbf{x}_{t}=[(x_{1,t})^T, \ldots, (x_{N,t})^T]^T$. Similar to the proof of Theorem~2 in \cite{c19}, the term $S_5$ is rewritten as
		\begin{align}
			&S_5\le\|\mathbf{x}_{1}\!-\!\mathbf{x}_1^\star\|^2+\sum_{t=1}^{T}(\sqrt{t+1}-\sqrt{t})\|\mathbf{x}_{t+1}\!-\!\mathbf{x}_{t+1}^\star\|^2\displaybreak[0]\notag\\
			&+\sum_{t=1}^{T}\sqrt{t}(\mathbf{x}_{t+1}^\star\!-\!\mathbf{x}_{t}^\star)^T(\mathbf{x}_{t+1}^\star+\mathbf{x}_{t}^\star-2\mathbf{x}_{t+1})\displaybreak[0] \notag \\
			&\le NR^2(1\!+\!\sum_{t=1}^{T}(\sqrt{t+1}\!-\!\sqrt{t})) \!+\!2NR\!\!\sum_{t=1}^{T}\!\sqrt{t}\|\mathbf{x}_{t+1}^\star\!-\!\mathbf{x}_{t}^\star\|\displaybreak[0]\notag\\
			&\le 2NR^2\sqrt{T}+2NRV_T,\label{eq:etatVtsumtxtxstar}
		\end{align}
		where $V_T:=\sum_{t=1}^T\sqrt{t}\sum_{i=1}^N\left\|x_{i, t+1}^*-x_{i, t}^*\right\|$ and the last inequality follows from  Assumption~\ref{asm:primalcouplingprob} and $\sqrt{T+1}\le 2\sqrt{T}$, $\forall T \ge 1$.
		Combing \eqref{eq:VtsumtTfitfitstar} with \eqref{eq:sumt1divideVt}--\eqref{eq:etatVtsumtxtxstar} yields Theorem~\ref{thm:TVdynamicregret}.
		
		\subsection{Proof of Theorem~\ref{thm:WioutSlaterCV}} \label{prf:WioutSlaterCV}
		By $\tilde{x}_{i,t}=\tilde{x}_{i}$, $\forall i \in \mathcal{V}$ that satisfies $\sum_{i=1}^N g_i(\tilde{x}_i)\le \mathbf{0}_p$, we have $\langle\bar{\mu}_t, \sum_{i=1}^{N}g_i(\tilde{x}_{i})\rangle \le 0$. Based on this and $\alpha_t=\sqrt{t}$, $\eta_t=t$, summing \eqref{eq:barmut1barmutnormbound} from $t=1$ to $T$ yields  
		\begin{align}
			&\frac{N}{2}\sum_{t=1}^T(\|\bar{\mu}_{t+1}\|^2-\|\bar{\mu}_{t}\|^2) =\frac{N}{2}\|\bar{\mu}_{T+1}\|^2\le\!(\frac{N}{2}\!\!+\!\!\frac{N}{r})TB_y^2\!\displaybreak[0] \notag\\
			&+\sum_{t=1}^T\!\frac{NG^2\alpha_t^2}{4\eta_t}\!+\!(2B_y\!+\!2F)\!\sum_{t=1}^T\sum_{i=1}^{N}\|\bar{\mu}_t\!-\!\lambda_{i,t+1}\|\displaybreak[0]\notag\\
			&+\sum_{t=1}^T\sum_{i=1}^N \alpha_t\partial f_{i,t}(x_{i,t})^T\!(\tilde{x}_{i,t}\!-\!x_{i,t})\!\displaybreak[0] \notag\\ 
			&+\sum_{t=1}^T\sum_{i=1}^N \eta_t(\|x_{i,t}\!-\tilde{x}_{i,t}\|^2\!\!-\!\|x_{i,t+1}\!-\!\tilde{x}_{i,t}\|^2)\displaybreak[0] \notag\\ 
			&\le  (\frac{N}{2}\!\!+\!\!\frac{N}{r})TB_y^2\!+\frac{NG^2T}{4}+\!(2B_y\!+\!2F)\!\frac{8N^2B_y\sqrt{p}T}{r(1-\sigma)}\displaybreak[0]\notag\\
			&+NGRT^{\frac{3}{2}}+2TNR^2,\displaybreak[0]
			\label{eq:muT1normbound}
		\end{align}
		where Lemma~\ref{lem:dualconsesusboud}, Cauchy–Schwarz inequality, Assumption~\ref{asm:primalcouplingprob}, \eqref{eq:boundsibradientoffigi}, \eqref{eq:etatVtsumtxtxstar}, and $\sum_{t=1}^{T}\alpha_t \le 1+\int_{t=1}^{T}t^{1/2}dt\le T^{\frac{3}{2}}$ are used to infer the last inequality.
		In light of \eqref{eq:updateofcouplinguk}, we have $\mu_{i,t+1} \ge \hat{\mu}_{i, t}+y_{i,t+1}$. Summing this inequality from $i=1$ to $N$ gives $\bar{\mu}_{t+1} \ge \bar{\mu}_{t}+\frac{1}{N}\sum_{i=1}^N g_i(x_{i,t+1})$, which leads to
		$\sum_{t=1}^T\sum_{i=1}^N g_{i}(x_{i,t+1})\le N\sum_{t=1}^T(\bar{\mu}_{t+1}-\bar{\mu}_{t})\le N\bar{\mu}_{T+1} \le N\|\bar{\mu}_{T+1}\|$. Invoking to the convexity of $g_i$ gives $
		\sum_{t=1}^T\sum_{i=1}^N g_{i}(x_{i,t})\le \sum_{t=1}^T\sum_{i=1}^N g_{i}(x_{i,t+1})+NGR\mathbf{1}_p\le N\bar{\mu}_{T+1}+NGR\mathbf{1}_p$,
		which leads to
		\begin{align}
			\operatorname{Reg}^c(T)	\le N\|\bar{\mu}_{T+1}\|+NGR\sqrt{p}.\displaybreak[0] \label{eq:sumTsumNgitgit1}
		\end{align}
		The inequality  \eqref{eq:muT1normbound} implies $\|\bar{\mu}_{T+1}\|=\mathcal{O}(T^{\frac{3}{4}})$. Inserting it with \eqref{eq:sumTsumNgitgit1} gives Theorem~\ref{thm:WioutSlaterCV}.
		
		\subsection{Proof of Theorem~\ref{thm:WiSlaterCV}} \label{prf:WiSlaterCV}
		From \eqref{eq:sumTsumNgitgit1}, we observe that $\operatorname{Reg}^c(T)$ depends on $\|\bar{\mu}_{T+1}\|$. The following lemma presents a smaller bound of $\|\bar{\mu}_{T+1}\|$ than \eqref{eq:muT1normbound}, enabling a smaller bound on $\operatorname{Reg}^c(T)$. 
		\begin{lemma}\label{lem:withSCboundofbarmut}
			Let $\tau=\lceil \sqrt{t} \rceil$, $\delta=B_y+\epsilon$. For any $t \ge 1$,
			\begin{align}
				\|\bar{\mu}_t\|&\le  4\delta\sqrt{t}\!+\!	\theta_{t}(\tau)\!+\!\frac{16\sqrt{t}\delta^2}{\epsilon} \log\frac{32\delta^2}{\epsilon^2}\!+\!6B_y. \displaybreak[0]
			\end{align}
			where $\theta_t(\tau)=(1+\frac{2}{r})\frac{B_y^2}{\epsilon}+\frac{G^2}{2\epsilon}\!+\!\frac{(2B_y\!+\!2F)16NB_y\sqrt{p}}{r\epsilon(1-\sigma)}+\frac{4GR\alpha_t}{\epsilon}+\frac{4R^2\eta_t}{\epsilon\tau}+(2 B_y^2+\epsilon)\tau$.
		\end{lemma}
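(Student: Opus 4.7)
The plan is to specialize Lemma~\ref{lem:mubart1normandmubartbound} to the Slater point $\hat x_i$ so that a negative linear-in-$\|\bar\mu_t\|$ drift term appears, and then to convert the resulting recursion into an $\mathcal O(\sqrt t)$ bound via a Hajek-style window argument. Throughout I will use the componentwise nonnegativity $\bar\mu_t\ge\mathbf{0}_p$ (inherited from the projection in \eqref{eq:updateofcouplinguk}) and the one-step increment bound $\|\bar\mu_{t+1}-\bar\mu_t\|\le B_y$, which follows from \eqref{eq:barmut1barmut} together with $\|\epsilon_{i,t+1}\|\le B_y$ supplied by Lemma~\ref{lem:epsionlboundcitbound}.

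First, set $\tilde x_{i,t}=\hat x_i$ in Lemma~\ref{lem:mubart1normandmubartbound}. Assumption~\ref{asm: slatercondition} then gives $\langle\bar\mu_t,\sum_{i=1}^N g_i(\hat x_i)\rangle\le -\epsilon\|\bar\mu_t\|_1\le -\epsilon\|\bar\mu_t\|$. Substituting this, bounding the linearised objective by $NGR\alpha_t$ via \eqref{eq:boundsibradientoffigi}, feeding Lemma~\ref{lem:dualconsesusboud} into the consensus sum, and discarding the nonpositive term $-\eta_t\|x_{i,t+1}-\hat x_i\|^2$ yields a per-step drift of the form
\begin{equation*}
\tfrac{N}{2}\|\bar\mu_{t+1}\|^2-\tfrac{N}{2}\|\bar\mu_t\|^2\le A_t-N\epsilon\|\bar\mu_t\|,
\end{equation*}
where $A_t=\mathcal O(\sqrt t)$ aggregates the $B_y^2$ constants, the $NG^2\alpha_t^2/(4\eta_t)$ stepsize term, the geometric consensus sum, the gradient contribution $NGR\alpha_t$, and the proximal residue $N\eta_tR^2$.

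Second, telescope this inequality over a sliding window of length $\tau=\lceil\sqrt t\rceil$ ending at $t$. The increment bound forces the Lipschitz-in-index estimate $\|\bar\mu_k\|\ge\|\bar\mu_t\|-(t-k)B_y$ for every $k$ in the window, so that
\begin{equation*}
\sum_{k=t-\tau}^{t-1}\|\bar\mu_k\|\ge\tau\|\bar\mu_t\|-\tfrac{B_y\tau(\tau+1)}{2}.
\end{equation*}
Inserting this into the telescoped drift produces a quadratic inequality of shape $\tfrac{N}{2}\|\bar\mu_t\|^2+N\epsilon\tau\|\bar\mu_t\|\le\tfrac{N}{2}\|\bar\mu_{t-\tau}\|^2+\bar A_t\tau+\tfrac{N\epsilon B_y\tau(\tau+1)}{2}$, where $\bar A_t$ upper bounds $A_k$ over the window. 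The choice $\tau=\lceil\sqrt t\rceil$ is precisely the one that balances $4R^2\eta_t/(\epsilon\tau)=\mathcal O(\sqrt t/\epsilon)$ against $(2B_y^2+\epsilon)\tau=\mathcal O(\sqrt t)$, reproducing the summands listed in $\theta_t(\tau)$.

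The remaining step, and also the main obstacle, is to kill the $\|\bar\mu_{t-\tau}\|^2$ term without letting it accumulate. I will invoke a Hajek-type deterministic drift lemma as used in \cite{c17} and \cite{c19}: the combination of bounded increments $\|\bar\mu_{k+1}-\bar\mu_k\|\le B_y$ with the conditional contraction (whenever $\|\bar\mu_k\|$ exceeds a threshold proportional to $A_k/(N\epsilon)$ the Lyapunov value decreases by at least $\epsilon\|\bar\mu_k\|-\mathcal O(1)$) delivers an exponential-moment estimate whose logarithm produces the additive slack $4\delta\sqrt t+(16\sqrt t\,\delta^2/\epsilon)\log(32\delta^2/\epsilon^2)+6B_y$ with $\delta=B_y+\epsilon$. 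The difficulty is that the classical Hajek lemma assumes a time-invariant drift bound, while here $A_t$ grows like $\sqrt t$; the remedy is to freeze $A_k$ by its maximum over the window of length $\lceil\sqrt t\rceil$ (legitimate because $A_k$ depends monotonically on $k$ through $\alpha_k$ and $\eta_k$), apply the drift lemma on that frozen window, and then paste the windowed bounds together via the bounded-increment property. Careful tracking of the constants, especially the factor $32\delta^2/\epsilon^2$ inside the logarithm, then yields the precise inequality claimed in Lemma~\ref{lem:withSCboundofbarmut}.
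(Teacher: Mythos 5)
Your proposal follows essentially the same route as the paper's proof: specialize Lemma~\ref{lem:mubart1normandmubartbound} at the Slater point to create the $-\epsilon\|\bar{\mu}_t\|$ drift, telescope over a window of length $\tau=\lceil\sqrt{t}\rceil$ with the bounded-increment estimate $\|\bar{\mu}_{t+1}\|-\|\bar{\mu}_t\|\le B_y$ to obtain the conditional contraction when $\|\bar{\mu}_t\|\ge\theta_t(\tau)$, and then convert it into an $\mathcal{O}(\sqrt{t})$ bound via a Hajek-type exponential-moment recursion, handling the time-varying drift exactly as the paper does through the monotonicity of $\theta_t$. The only part you leave implicit is the final constant bookkeeping, which the paper carries out by the explicit choices $\xi=\epsilon/2$, $\tilde{r}=\xi/(4\lceil\sqrt{t}\rceil\delta^2)$, $\rho=1-\tilde{r}\xi\tau/2$ before taking logarithms; otherwise the argument is the same.
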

		\begin{proof}
			We first bound the difference between $\|\bar{\mu}_{t+1}\|$ and $\|\bar{\mu}_{t}\|$, $\forall t \ge 1$, i.e.,
			\begin{align}
				-B_y\le \|\bar{\mu}_{t+1}\|- \|\bar{\mu}_{t}\|\le B_y,\displaybreak[0] \label{eq: barmut1barmutlowuppbound}	
			\end{align}
			where \eqref{eq:boundofepsiloncit} and \eqref{eq:barmut1barmut} give rise to the right-hand inequality. With regard to the left-hand inequality, it can be obtained $\|\bar{\mu}_{t}\|-\|\bar{\mu}_{t+1}\|\le \|\bar{\mu}_{t+1}-\bar{\mu}_{t}\|=\|\frac{1}{N}\sum_{i=1}^{N}\epsilon_{i,t+1}\|\le B_y$.
			
			Let $\tilde{x}_{i,t}=\hat{x}_{i}$ and $\triangle_s=\frac{1}{2}\|\bar{\mu}_{s+1}\|^2-\frac{1}{2}\|\bar{\mu}_{s}\|^2$, Summing \eqref{eq:barmut1barmutnormbound} from $s=t, t+1, \ldots, t+\tau-1$, we have
			\begin{align}
				&\sum_{s=t}^{t+\tau-1}\!\!\triangle_s\le\!(\frac{1}{2}\!+\!\frac{1}{r})B_y^2\tau\!+\!\frac{G^2\tau}{4}\!+\!\eta_{t+\tau-1}R^2-\!\epsilon\!\!\sum_{s=t}^{t+\tau-1}\!\!\|\bar{\mu}_{s}\|\!\displaybreak[0]\notag\\
				&+\frac{2B_y\!+\!2F}{N}\!\sum_{s=t}^{t+\tau-1}\!\!\sum_{i=1}^{N}\!\|\bar{\mu}_s\!-\!\lambda_{i,s+1}\|+\!GR\sum_{s=t}^{t+\tau-1}\!\alpha_s\!,	\displaybreak[0]\label{eq:trianglesbound}
			\end{align}
			where $\eta_{t+\tau-1}R^2$ is obtained by referring to \eqref{eq:etatVtsumtxtxstar}. Based on Assumption~\ref{asm: slatercondition}, the term $-\epsilon\sum_{s=t}^{t+\tau-1}\|\bar{\mu}_s\|$ comes from  $\sum_{s=t}^{t+\tau-1}\langle\bar{\mu}_s,\sum_{i=1}^{N}g_i(\hat{x}_{i})\rangle\le \sum_{s=t}^{t+\tau-1}\langle\bar{\mu}_s,\epsilon\mathbf{1}_p\rangle\le  -\epsilon\sum_{s=t}^{t+\tau-1}\|\bar{\mu}_s\|$, where $\bar{\mu}_s \ge \mathbf{0}_p$. Since $1 \le \tau \le t+1$ and $V_s=\sqrt{s}$, we obtain
			$\sum_{s=t}^{t+\tau-1}\alpha_s\le 2\tau \alpha_{t}$ and $\eta_{t+\tau-1}\le 2\eta_t$. By resorting to
			Lemma~\ref{lem:dualconsesusboud} and \eqref{eq: barmut1barmutlowuppbound}, we obtain
			\begin{align}
				&\sum_{s=t}^{t+\tau-1}\sum_{i=1}^{N}\|\bar{\mu}_s\!-\!\lambda_{i,s+1}\|\ \le \frac{8N^2B_y\sqrt{p}}{r(1-\sigma)}\tau, \displaybreak[0] \label{eq:sumTaudualconsenus}\\
				&\sum_{s=t}^{t+\tau-1}\|\bar{\mu}_{s}\|\!\!\ge\!\!\! \sum_{s=t}^{t+\tau-1}\!(\|\bar{\mu}_{t}\|\!\!-\!\!(s\!-\!t)B_y)\!\ge\! \tau \|\bar{\mu}_{t}\|\!-\!\tau^2B_y, \displaybreak[0] \label{eq:barmuslowbound}
			\end{align}
			which together with 
			\eqref{eq:trianglesbound} results in
			\begin{align}
				&\sum_{s=t}^{t+\tau-1}\triangle_s\le(\frac{1}{2}\!+\!\frac{1}{r})B_y^2\tau\!+\!\frac{G^2\tau}{4}\!+\!2R^2\eta_t\!+\!2GR\tau \alpha_t\displaybreak[0]\notag\\
				&+\!\frac{(2B_y\!+\!2F)8NB_y\sqrt{p}}{r(1-\sigma)}\tau\!+\!\epsilon\tau^2B_y\!-\!\epsilon\tau \|\bar{\mu}_{t}\|. \displaybreak[0]\notag \label{eq: sumtrianglewithminusbarmut}
			\end{align}
			This inequality implies
			$\|\bar{\mu}_{t+\tau}\|^2\!=\!\|\bar{\mu}_{t}\|^2\!+\!2\sum_{s=t}^{t+\tau-1}\!\!\triangle_s \le\|\bar{\mu}_{t}\|^2\!-\!2\epsilon\tau \|\bar{\mu}_{t}\|+\epsilon\tau\theta_t(\tau)$ according to the definition of $\theta_t(\tau)$.
			Thus, if  $\|\bar{\mu}_t\|\ge \theta_t(\tau)$, we have
			\begin{equation}\label{eq:batmutaddtboundbytau}
				\|\bar{\mu}_{t+\tau}\|-\|\bar{\mu}_t\| \leq -\frac{\epsilon\tau}{2}, \forall t \ge 1.
			\end{equation}
			
			Next we utilize \eqref{eq:batmutaddtboundbytau} to bound $\|\bar{\mu}_t\|$. Consider the case $t \ge 6$. Let $\delta=B_y+\epsilon,~\xi=\frac{\epsilon}{2}$, $\tilde{r}=\frac{\xi}{4\lceil \sqrt{t}\rceil\delta^2}$, and $\rho=1-\frac{\tilde{r}\xi\tau}{2}$, which implies $0<\rho <1$. Denote $w_t=\|\bar{\mu}_t\|-\|\bar{\mu}_{t-\tau}\|$. According to \eqref{eq: barmut1barmutlowuppbound}, $w_t=\sum_{s=t-\tau}^{t-1}\|\bar{\mu}_{s+1}\|-\|\bar{\mu}_{s}\| \le \tau B_y \le \tau\delta $. Like Lemma~6 in \cite{c18},
			$e^{\tilde{r}\|\bar{\mu}_t\|}=e^{\tilde{r}(w_t+\|\bar{\mu}_{t-\tau}\|)}\le e^{\tilde{r}\|\bar{\mu}_{t-\tau}\|}(1+\tilde{r} w_t+\frac{1}{2} \tilde{r} \tau \xi)$. Note that $t-\tau \ge 1$, $\forall t \ge 6$. If $\|\bar{\mu}_{t-\tau}\|\ge \theta_{t-\tau}(\tau)$, we have $w_t=\|\bar{\mu}_t\|\!-\!\|\bar{\mu}_{t-\tau}\|\le-\frac{\epsilon\tau}{2}= -\xi\tau $ by \eqref{eq:batmutaddtboundbytau}, which implies
			\begin{align}
				e^{\tilde{r}\|\bar{\mu}_t\|} \le \rho e^{\tilde{r}\|\bar{\mu}_{t-\tau}\|}+e^{\tilde{r}\tau\delta}e^{\tilde{r}\theta_{t-\tau}(\tau)}.\displaybreak[0] \label{eq:erbarmutbound}
			\end{align}
			It is easy to verify that \eqref{eq:erbarmutbound} holds if $\|\bar{\mu}_{t-\tau}\|< \theta_{t-\tau}(\tau)$.
			Moreover, $\forall t \ge 6$, $\lfloor \frac{t}{\tau}\rfloor= k$ for some $k \ge 2$. Consequently,
			$t-(k-2)\tau \ge 1$. Thus, we can apply \eqref{eq:erbarmutbound} for $s=t, t-\tau, \ldots, t-(k-2)\tau $ to obtain
			\begin{align}
				&e^{\tilde{r}\|\bar{\mu}_t\|}\le \rho e^{\tilde{r}\|\bar{\mu}_{t-\tau}\|}+e^{\tilde{r}\tau\delta}e^{\tilde{r}\theta_{t-\tau}(\tau)}\displaybreak[0] \notag\\
				& \le \rho^{k-1} e^{\tilde{r} \|\bar{\mu}_{t-(k-1) \tau}\|}+e^{\tilde{r} \delta \tau} \sum_{i=1}^{k-1} \rho^{i-1} e^{\tilde{r} \theta_{t-i \tau}}\displaybreak[0] \notag\\
				&\le\rho^{k-1} e^{2\tilde{r}\tau\delta}+e^{\tilde{r} \delta \tau}e^{\tilde{r} \theta_{t}} \sum_{i=1}^{k-1} \rho^{i-1} \le \frac{e^{2\tilde{r}\tau\delta}e^{\tilde{r} \theta_{t}}}{1-\rho}, \displaybreak[0] \label{eq:tmore9erbarmutbound}
			\end{align}
			where the third inequality was resulted from: (1) $\|\bar{\mu}_{t-(k-1) \tau}\|\le (t-(k-1) \tau)B_y\le 2\tilde{r}\tau\delta$ according to \eqref{eq: barmut1barmutlowuppbound} and $t-(k-1) \tau \le 2\tau$; (2) $0 <\theta_{t-i \tau} \le\theta_{t}$  
			because $\theta_{t}$ increases with $t$.
			From \eqref{eq:tmore9erbarmutbound} and $\tau=\lceil\sqrt{t}\rceil\le 2\sqrt{t}$, we have
			\begin{align}
				\|\bar{\mu}_t\|&\le 2\tau\delta+\theta_{t}(\lceil\sqrt{t}\rceil)+\frac{1}{\tilde{r}} \log\frac{1}{1-\rho}\displaybreak[0] \notag\\
				& \le 4\delta\sqrt{t}+\theta_{t}(\lceil\sqrt{t}\rceil)\!+\!\frac{16\sqrt{t}\delta^2}{\epsilon} \log\frac{32\delta^2}{\epsilon^2}\!+\!6B_y. \displaybreak[0]\label{eq:withSCbarmutbound}
			\end{align}
			Consider the case $t<6$. It is straightforward to obtain $\|\bar{\mu}_t\|\le tB_y\le 6B_y$. Thus, Lemma~\ref{lem:withSCboundofbarmut} holds. 
		\end{proof}
		
		Since $\theta_{t}(\lceil\sqrt{t}\rceil) =\mathcal{O}(\sqrt{t})$ according to the definition of $\theta_{t}$ in Lemma~\ref{lem:withSCboundofbarmut}, by combing it with  \eqref{eq:withSCbarmutbound}, $\|\bar{\mu}_{t}\|=\mathcal{O}(\sqrt{t})$. Like \eqref{eq:sumTsumNgitgit1}, we have $\operatorname{Reg}^c(T)=\mathcal{O}(\sqrt{T})$. Thus, Theorem~\ref{thm:WiSlaterCV} holds.

		\addtolength{\textheight}{-3cm}   
		


\begin{thebibliography}{99}
		\bibitem{c1}
		G. Lee, W. Saad and M. Bennis, "An online optimization framework for distributed fog network formation with minimal latency," {\it IEEE Transactions on Wireless Communications}, vol. 18, no. 4, pp. 2244--2258, 2019.
		\bibitem{c2}
		S. Shahrampour and A. Jadbabaie, "Distributed online optimization in dynamic environments using mirror descent," {\it IEEE Transactions on Automatic Control}, vol. 63, no. 3, pp. 714--725, 2018.
		\bibitem{c3}
		G. Carnevale, A. Camisa and G. Notarstefano, "Distributed online aggregative optimization for dynamic multi-robot coordination," {\it IEEE Transactions on Automatic Control}, pp. 1--8, 2022.
		\bibitem{c4}
		D. Mateos-Núñez and J. Cortés, "Distributed online convex optimization Over Jointly Connected Digraphs," {\it IEEE Transactions on Network Science and Engineering}, vol. 1, no. 1, pp. 23--37, 2014.
		\bibitem{c5}
		X. Li, L. Xie, and N. Li, "A survey of decentralized online learning", arXiv:2205.00473, 2022.
		\bibitem{c6}
		M. Akbari, B. Gharesifard and T. Linder, "Distributed online Convex optimization on time-varying directed graphs," {\it  IEEE Transactions on Control of Network Systems}, vol. 4, no. 3, pp. 417--428, 2017.
		\bibitem{c7}
		A. Nedić, S. Lee and M. Raginsky, "Decentralized online optimization with global objectives and local communication," in {\it American Control Conference}, Chicago, IL, USA, 2015, pp. 4497--4503.
		\bibitem{c8}
		A. Koppel, F. Y. Jakubiec and A. Ribeiro, "A saddle point algorithm for networked online convex optimization," {\it IEEE Transactions on Signal Processing}, vol. 63, no. 19, pp. 5149--5164, 2015.
		\bibitem{c9}
		C. Wang, S. Xu, D. Yuan, B. Zhang, and Z. Zhang, "Distributed online convex optimization with a bandit primal-dual mirror
		descent push-sum algorithm," {\it Neurocomputing}, vol. 497, pp. 204--215, 2022.
		\bibitem{c10}
		X. Li, X. Yi and L. Xie, "Distributed online optimization for multi-agent networks With coupled inequality constraints," {\it IEEE Transactions on Automatic Control}, vol. 66, no. 8, pp. 3575--3591, 2021.
		\bibitem{c11}
		K. Tada, N. Hayashi and S. Takai, "Distributed inequality constrained online optimization for unbalanced digraphs using row stochastic property," in {\it IEEE Conference on Decision and Control}, Cancun, Mexico, 2022, pp. 2283--2288.
		\bibitem{c12}
		S. Lee and M. M. Zavlanos, “On the sublinear regret of distributed
		primal-dual algorithms for online constrained optimization,” 
		{\it arXiv:1705.11128}, 2017.
		\bibitem{c13}
		J. Li, C. Gu, Z. Wu and T. Huang, "Online learning algorithm for distributed convex optimization with time-varying coupled constraints and bandit feedback," {\it IEEE Transactions on Cybernetics}, vol. 52, no. 2, pp. 1009--1020, 2022.
		\bibitem{c14}
		X. Yi, X. Li, L. Xie and K. H. Johansson, "Distributed online convex optimization with time-varying coupled inequality constraints," {\it  IEEE Transactions on Signal Processing}, vol. 68, pp. 731--746, 2020.
		\bibitem{c15}
		X. Yi, X. Li, T. Yang, L. Xie, T. Chai, and K. H. Johansson, "Distributed Bandit online convex optimization with time-varying coupled inequality constraints," {\it IEEE Transactions on Automatic Control}, vol. 66, no. 10, pp. 4620--4635, 2021.
		\bibitem{c16}
		A. Nedić and A. Olshevsky, "Distributed optimization over time-varying directed graphs," {\it IEEE Transactions on Automatic Control}, vol. 60, no. 3, pp. 601--615, 2015.	
		\bibitem{c17}
		M. J. Neely and H. Yu, “Online convex optimization with time-varying
		constraints,”  {\it arXiv:1702.04783}, 2017.
		\bibitem{c18}
		Y. Kim and D. Lee, “Online convex optimization with stochastic constraints: zero constraint violation and bandit feedback,” {\it arXiv:2301.11267}, 2023.
		\bibitem{c19}
		X. Yi, X. Li, L. Xie, T. Yang, Li. Xie, T. Chai and K. H. Johansson, "Regret and cumulative constraint violation analysis for online convex optimization with long term constraints," in {\it Proceedings of International Conference on Machine Learning}, 2021, pp. 11998--12008.
		\bibitem{c20}
		T. Chen, Q. Ling and G. B. Giannakis, "An online convex optimization approach to proactive network resource allocation," {\it  IEEE Transactions on Signal Processing}, vol. 65, no. 24, pp. 6350--6364, 2017.
		\bibitem{c21}
		X. Cao, J. Zhang and H. V. Poor, "A virtual-queue-based algorithm for constrained online convex optimization with applications to data center resource allocation," {\it IEEE Journal of Selected Topics in Signal Processing}, vol. 12, no. 4, pp. 703--716, 2018.
		\bibitem{c22}
		X. Cao and K. J. R. Liu, "Online convex optimization With time-varying constraints and bandit feedback," {\it IEEE Transactions on Automatic Control}, vol. 64, no. 7, pp. 2665--2680, 2019.
		\bibitem{c23}
		J. Li, C. Li, Y. Xu, Z. Y. Dong, K. P. Wong, and T. Huang, "Noncooperative game-based distributed charging control for plug-in electric vehicles in distribution networks," {\it IEEE Transactions on Industrial Informatics}, vol. 14, no. 1, pp. 301--310, 2018.
		\bibitem{c24}
		R. Vujanic, P. M. Esfahani, P. J. Goulart, S. Mariethoz, and M. Morari,
		“A decomposition method for large scale milps, with performance
		guarantees and a power system application,” {\it Automatica}, vol. 67, pp. 144--156, 2016. 
		\bibitem{c25}
		D. P. Bertsekas, Nonlinear Programming. Belmont, MA: Athena
		Scientific, 1999.
	\end{thebibliography}
	\end{document}